\theoremstyle{plain}
\newtheorem{thm}{Theorem}[section]
\newtheorem{lem}[thm]{Lemma}
\newtheorem{cor}[thm]{Corollary}
\newtheorem{prop}[thm]{Proposition}
\newtheorem{prob}[thm]{Problem}
\newtheorem*{prob*}{Problem}
\theoremstyle{definition}
\newtheorem{ex}[thm]{Example}
\newcommand{\R}{\ensuremath{\mathbb{R}}}
\newcommand{\N}{\ensuremath{\mathbb{N}}}
\newcommand{\cB}{\ensuremath{\mathcal{B}}}
\newcommand{\cO}{\ensuremath{\mathcal{O}}}
\newcommand{\inv}{\ensuremath{^{-1}}}
\newcommand{\sub}{\subseteq}
\def\qi{quasi-iso\-metric}
\def\qiy{quasi-iso\-me\-try}
\def\qis{quasi-iso\-me\-tries}
\def\qg{quasi-geo\-desic}
\newcommand{\comment}[1]{}
\newcommand{\nat}{{\mathbb N}}
\newcommand{\real}{{\mathbb R}}
\newcommand{\lrarrow}{{\leftrightarrow}}
\def\hm{semimetric}
\def\Hm{Semimetric}
\def\?#1{\vadjust{\vbox to 0pt{\vss\vskip-8pt\leftline{%
     \llap{\hbox{\vbox{\pretolerance=-1
     \doublehyphendemerits=0\finalhyphendemerits=0
     \hsize16truemm\tolerance=10000\small
     \lineskip=0pt\lineskiplimit=0pt
     \rightskip=0pt plus16truemm\baselineskip8pt\noindent
     \hskip0pt        %(without this, the first word is never hyphenated!)
     #1\endgraf}\hskip7truemm}}}\vss}}}
\newenvironment{txteq}
  {
    \begin{equation}
    \begin{minipage}[c]{0.85\textwidth} % set width to 0.9 x textwidth
    \em                                % switch on emph
  }
  {\end{minipage}\end{equation}\ignorespacesafterend}
\newenvironment{txteq*}
  {
    \begin{equation*}
    \begin{minipage}[c]{0.9\textwidth} % set width to 0.9 x textwidth
    \em                                % switch on emph
  }
  {\end{minipage}\end{equation*}\ignorespacesafterend}
\newenvironment{txteqtag}[1]
  {
    \begin{equation}\tag{#1}
    \begin{minipage}[c]{0.85\textwidth} % set width to 0.9 x textwidth
    \em                                % switch on emph
  }
  {\end{minipage}\end{equation}\ignorespacesafterend}
\begin{document}

\title[Quasi-isometry invariance of hyperbolicity]{Quasi-isometry invariance of hyperbolicity in semimetric spaces, digraphs and semigroups}
\author{Matthias Hamann}
\thanks{Funded by the German Research Foundation (DFG) -- project number 448831303.}
\address{Matthias Hamann, Mathematics Institute, University of Warwick, Coventry, UK}
\date{}

\begin{abstract}
Gray and Kambites introduced a notion of hyperbolicity in the setting of \hm\ spaces like digraphs or semigroups.
We will prove that under a small additional geometric assumption their notion of hyperbolicity is preserved by \qis.
Applied to semigroups, this will partially solve a problem of Gray and Kambites.
\end{abstract}

%\subjclass{20F67, 05C20, 20M05}

\maketitle

\section{Introduction}\label{sec_Intro}

Since Gromov's paper~\cite{gromov} on hyperbolic groups, hyperbolicity is a topic that received a lot of interest, mostly for geodesic metric spaces, graphs or groups, see e.\,g.\ \cite{ABCFLMSS, BridsonHaefliger,CoornDelPapa,GhHaSur, KB-BoundaryHypGroup}.
Occasionally, some of these assumptions are dropped.
One such example is the paper of V\"ais\"al\"a~\cite{V-GromovHyp}, where the metric spaces under consideration need not be geodesic.
In the present paper, we try to extend the perhaps most important result, \qiy\ invariance, to a generalisation of hyperbolicity in spaces, where the distance function need not be symmetric.
The particular notion we are interested in was defined by Gray and Kambites~\cite{GK-HyperbolicDigraphsMonoids} in order to have a geometric notion of hyperbolicity for monoids.

Gray and Kambites were not the first to look for a notion of hyperbolicity for monoids but their notion is the first geometric one that takes directions into account.
Previous definitions either were based on context-free rewriting systems, see e.\,g.\ \cite{C-HyperbolicMonoids,CM-RewritingSystemsHyperbolicity,DG-WordHyperbolicSemigroups,FK-HyperbolicGroupsSemigroups,HT-HyperbolicMonoids}, or looked at the underlying undirected graph of Cayley digraphs of semigroups, see e.\,g.\ \cite{C-HyperbolicMonoids,CS-InfiniteWordsRewritingSystems,FK-HyperbolicGroupsSemigroups}.
Portilla et al.\ \cite{PRST-HyperbolicDigraphs} considered a notion of hyperbolicity in (a certain class of) digraphs that is also defined via hyperbolicity of their underlying undirected graphs.
The advantage of a geometric notion for monoids that takes directions into account is that it also applies to other objects such as directed graphs, digraphs for short, or geodesic \hm\ spaces.
The latter are spaces where the distance function has $[0,\infty]$ as its range and we do ask for all condition needed for a metric except for $d(x,y)=d(y,x)$ for distinct $x$ and~$y$, see Section~\ref{sec_HMSpaces} for the formal definition.
In the literature, these spaces are also known as quasi-metric or asymmetric metric spaces, but we use the term "semimetric" here.
We refer to Gray and Kambites \cite[Section~2]{GK-SemimetricSpaces} for a discussion about these terms.

In Section ~\ref{sec_ex}, we will see that the notion of Gray and Kambites indeed generalizes the one for geodesic metric spaces.
Their notion is based on a thinness condition for geodesic triangles, see Section~\ref{sec_thin} for the precise definitions.

While Gray and Kambites were mostly interested in properties of hyperbolic monoids, such as the word problem or other semigroup-theoretic decision problems and the question of finite presentability of those monoids, we focus on a geometric aspect of the spaces themselves.
We consider the question whether this notion of hyperbolicity is preserved by \qis.

In the situation of metric spaces, in order to prove that \qis\ preserve hyperbolicity, first, it is shown that hyperbolicity is equivalent with geodesic stability and the latter is easily seen to be preserved by \qis.
In order to prove that equivalence, one uses that both properties are also equivalent to exponential divergence of geodesics.

In the situation of \hm\ spaces, we do no longer have equivalence between these three properties.
More precisely, we will prove with a small additional geometric assumption that hyperbolicity implies both other properties but neither of the other two implies hyperbolicity nor do they imply each other, see Sections~\ref{sec_DivGeo} and~\ref{sec_GeoStab}.
Still, this is enough to prove that hyperbolicity (and geodesic stability) is preserved by \qis, see Section~\ref{sec_QI}.
It stays an open problem whether (exponential) divergence of geodesics is preserved by \qis.

Let us note that the definitions of e.\,g.\ \qis\ and geodesics are slightly different in \hm\ spaces compared to metric spaces.
For the introduction and as intuition, it is fine to just think of these notions as being similar to the ones in metric spaces and that they carry over the important properties that we will need.
We omit these definitions here in the introduction and instead refer to the later sections for these definitions.

Let us briefly talk about the geometric assumption on the semimetric spaces that we need.
In geodesic metric spaces $X$, we find for every point $x\in X$ and points $y,z$ in the ball of radius~$r$ around~$x$ a $y$-$z$ geodesic of length at most~$2r$.
For \hm\ spaces~$Y$, this is no longer true: if $x\in Y$ and $y,z\in Y$ with $d(x,y)<r$ and $d(x,z)<r$, then $d(y,z)$ need not be bounded or even be finite; it may also happen that the out-ball of radius~$r$ around~$x$ contains an infinite geodesic.
The same also holds for the in-balls.
We will ask our \hm\ spaces to satisfy a condition that bounds the lengths of geodesics in out-balls or in-balls of finite radius in terms of the radius.
So our results still cover a large proper subclass of the geodesic \hm\ spaces that contain all geodesic metric spaces.
For example, we will see that all digraphs of bounded degree satisfy this condition.
As a consequence, when applying our results to semigroups, we only obtain results for right cancellative semigroups.

In the final section, Section~\ref{sec_semigroups}, we apply our results to semigroups.
Gray and Kambites~\cite{GK-HyperbolicDigraphsMonoids} defined hyperbolicity for finitely generated semigroups by asking one of the Cayley digraphs be hyperbolic for a finite generating set.
They ask whether for a finitely generated hyperbolic semigroup each of its Cayley digraphs with respect to finite generating sets is hyperbolic.
As corollary from our result on \qiy\ invariance of hyperbolicity, we answer this positively for right cancellative semigroups.

\section{Preliminaries}

In this section, we define all basic notions related to pseudo-\hm\ spaces and to digraphs.

\subsection{\Hm\ spaces}\label{sec_HMSpaces}

Let $X$ be a set.
A map $d\colon X\times X\to[0,\infty]$ is a \emph{pseudo-\hm} if
\begin{enumerate}[(i)]
\item $d(x,x)=0$ for all $x\in X$ and
\item $d(x,y)\leq d(x,z)+d(z,y)$ for all $x,y,z\in X$.
\end{enumerate}
We then call $(X,d)$ a \emph{pseudo-\hm\ space}.

A pseudo-\hm\ is a \emph{\hm} if $d(x,y)=0$ if and only if $x=y$ for all $x,y\in X$.
Then we call $(X,d)$ a \emph{\hm\ space}.

For $x,y$ in a (pseudo-)\hm\ space $X$, we set
\[
d^\lrarrow(x,y):=\min \{d(x,y),d(y,x)\}.
\]

(Pseudo-)\Hm\ spaces $X$ come along with two natural topologies that are defined via the out-balls and the in-balls.
For $r\geq 0$ and $x\in X$, we set the \emph{out-ball} and the \emph{open out-ball} of radius~$r$ around~$x$ as
\[
\cB^+_r(x):=\{y\in X\mid d(x,y)\leq r\}, \qquad \mathring{\cB}^+_r(x):=\{y\in X\mid d(x,y)< r\}
\]
and the \emph{in-ball} and the \emph{open in-ball} of radius~$r$ around~$x$ as
\[
\cB^-_r(x):=\{y\in X\mid d(y,x)\leq r\}, \qquad \mathring{\cB}^-_r(x):=\{y\in X\mid d(y,x)< r\},
\]
respectively.

The open out-balls $\mathring{\cB}^+_r(x)$ for all $r\geq 0$ and $x\in X$ generate the \emph{forward} topology $\cO_f$ and the open in-balls $\mathring{\cB}^-_r(x)$ for all $r\geq 0$ and $x\in X$ generate the \emph{backward} topology $\cO_b$.

For $a,b\in\R$ with $a<b$, we consider the interval $[a,b]$ with the above two topologies.
A function $P\colon [a,b]\to X$ is a \emph{directed path} if it is continuous with respect to the forward topologies of $[a,b]$ and~$X$ and if it is continuous with respect to the backward topologies of $[a,b]$ and~$X$.
Then $P(a)$ is the \emph{starting point} and $P(b)$ is the \emph{end point} of~$P$.
The \emph{length} $\ell(P)$ of~$P$ is defined as
\[
\ell(P):=\lim_{N\to\infty}\sum_{i=1}^N d(P(t_{i-1}),P(t_i))
\]
with $t_i:=a+i(b-a)/N$ for all $0\leq i\leq N$.
For $x,y\in X$, a \emph{directed $x$-$y$ path} is a directed path with starting point~$x$ and end point~$y$.
For $U,V\sub X$, a \emph{directed $U$-$V$ path} is a directed path with starting point in~$U$ and end point in~$V$.

Let $P\colon [a,b]\to X$ and $Q\colon [a',b']\to X$ be directed paths.
They are \emph{parallel} if $P(a)=Q(a')$ and $P(b)=Q(b')$ and they are \emph{composable} if $P(b)=Q(a')$.
If they are composable, then we denote by $PQ$ the resulting directed path, their \emph{composition}.

We say that $x\in X$ \emph{lies on $P$} if it is in the image of~$P$.
For $x,y$ on~$P$ with $x=P(r)$ and $y=P(R)$ such that $r<R$, we denote by $xPy$ the subpath of~$P$ between $x$ and~$y$.
With a slight abuse of notation, we denote for $r\geq 0$ by $\cB^+_r(P)$, by $\cB^-_r(P)$, the out-ball, the in-ball, of radius $r$ around the image of~$P$, respectively.

Let $(X,d_X)$ and $(Y,d_Y)$ be \hm\ spaces.
A map $f\colon X\to Y$ is a \emph{\qi\ embedding} if there are constants $\gamma\geq 1$ and $c\geq 0$ such that
\[
\gamma\inv d_X(x,x')-c\leq d_Y(f(x),f(x'))\leq\gamma d_X(x,x')+c
\]
for all $x,x'\in X$.
It is a \emph{\qiy} if additionally for every $x\in X$ there is $y\in Y$ such that $d(f(x),y)\leq c$ and $d(y,f(x))\leq c$.
Then we say that $X$ is \emph{\qi} to~$Y$.
If we want to emphasize on the particular constants $\gamma$ and~$c$, we talk about \emph{$(\gamma,c)$-\qis}.
By Gray and Kambites~\cite[Proposition 1]{GK-SemimetricSpaces}, being \qi\ is an equivalence relation.
An \emph{isometry} is a $(1,0)$-\qiy.

A \emph{geodesic} from $x\in X$ to $y\in X$, also called an \emph{$x$-$y$ geodesic}, is a directed path $P$ from~$x$ to~$y$ with $d(u,v)=\ell(uPv)$ for all $u,v$ on~$P$ with $v$ on $uPy$.
For subsets $U,V\sub X$, a \emph{$U$-$V$ geodesic} is a directed $U$-$V$ path that is a geodesic.
For $\gamma\geq 1$ and $c\geq 0$, a \emph{$(\gamma,c)$-\qg} from~$x$ to~$y$ is a directed path from~$x$ to~$y$ with
\[
\ell(uPv)\leq \gamma d(u,v)+c
\]
for all $u,v$ on~$P$ with $v$ on~$uPy$.
We call $X$ \emph{geodesic} if there exists an $x$-$y$ geodesic for all $x,y\in X$ with $d(x,y)<\infty$.

Note that $x$-$y$ geodesics are not isometric images of $[0,d(x,y)]$ since we make no restrictions on $d(P(b),P(a))$ for $0\leq a\leq b\leq 1$.
Similarly, \qg s are not \qi\ images of $[0,d(x,y)]$.

\subsection{Digraphs}\label{sec_digraphs}

Here, a digraph $D=(V(D),E(D))$ is an oriented multigraph.
In particular, we allow our digraphs to have loops and multiple edges between the same two vertices.
For a subset $X$ of~$V(D)$, we denote by $D[X]$ the digraph \emph{induced by} $X$, i.\,e.\ the digraph with vertex set $X$ and all edges of~$D$ both of whose incident vertices lie in~$X$.

A \emph{directed path} is a sequence $x_0\ldots x_n$ of vertices such that $x_ix_{i+1}\in E(D)$ for all $0\leq i<n$.
A \emph{proper directed path} $P$ is a sequence $x_0\ldots x_n$ of pairwise distinct vertices such that $x_ix_{i+1}\in E(D)$ for all $0\leq i<n$.\footnote{With our terminology, we differ from the usual one for digraphs: what we call directed paths are usually directed walks and our proper directed paths are directed paths. We chose this different terminology because now directed paths in digraphs are the same regardless whether we consider it as a digraph or as a \hm\ space.}
The \emph{length} $\ell(P)$ of~$P$ is $n$, the number of edges of the path.

If $x_0,x_1,\ldots$ are distinct vertices in~$D$ with $x_ix_{i+1}\in E(D)$ we call $x_0x_1\ldots$ a \emph{ray}.
If we have $x_{i+1}x_i\in E(D)$ instead we say that $x_0x_1\ldots$ is an \emph{anti-ray}.

For $x,y\in V(D)$, let $d(x,y)$ be the length of a shortest directed path from~$x$ to~$y$ or $\infty$ if no such path exists.
Note that in contrast to graphs, this distance function is not a metric but a \hm.
The \emph{out-degree} of a vertex $x\in V(D)$ is the number of vertices $y\in V(D)$ with $d(x,y)=1$ and the \emph{in-degree} of~$x$ is the number of vertices $y\in V(D)$ with $d(y,x)=1$.
A digraph is \emph{locally finite} if its in- and out-degrees are all finite.

We consider the the following \hm\ space associated to a digraph $D$:
we consider the $1$-complex of the underlying undirected (multi-)graph of~$D$.
The distance between two points $x,y$ corresponding to vertices $u,v$, respectively, is set as follows: $d(x,y):=d(u,v)$.
For an inner point $x$ of an edge $uv$, its distance to a point $y$ corresponding to a vertex $w$ or an inner point of an edge $ww'$ is set as $d(x,y):=d(x,v)+d(v,w)+d(w,y)$, where we consider the edges $e$ to be directed paths of length~$1$ in the \hm\ space.

\section{Thin triangles}\label{sec_thin}

Let $X$ be a geodesic \hm\ space.
A \emph{triangle} consists of three points of~$X$ and three directed paths, one between every two of those points.
We call these paths the \emph{sides} and the three point the \emph{end points} of the triangle.
The triangle is \emph{geodesic} if all three sides are geodesics and it is \emph{transitive} if two of its sides are composable and the resulting directed path is parallel to the third side.

We consider thin triangles as defined by Gray and Kambites in~\cite{GK-HyperbolicDigraphsMonoids}.
Let $\delta\geq 0$.
A geodesic triangle is \emph{$\delta$-thin} if the following holds:
\begin{txteq*}
if $P,Q,R$ are the sides of the triangle and the starting point of~$P$ is either the starting or the end point of~$Q$ and the last point of~$P$ is either the starting or the end point of~$R$, then $P$ is contained in $\cB^+_\delta(Q)\cup \cB^-_\delta(R)$.
\end{txteq*}
If all geodesic triangles in~$X$ are $\delta$-thin then $X$ is \emph{$\delta$-hyperbolic}.
We call $X$ \emph{hyperbolic} if it is $\delta'$-hyperbolic for some $\delta'\geq 0$.

We note that this definition differs slightly from the definition of Gray and Kambites in that they only ask transitive geodesic triangles to be $\delta$-thin.
However, our first result is that -- up to the constant $\delta$ -- both definitions are equivalent.

\begin{prop}\label{prop_transTriSuffices}
Let $X$ be a geodesic \hm\ space and $\delta\geq 0$.
If all transitive geodesic triangles are $\delta$-thin, then all geodesic triangles are $3\delta$-thin.
\end{prop}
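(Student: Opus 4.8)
The plan is, first, to isolate the only essentially new case, and then to treat it by inserting auxiliary geodesics.

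Regard the three sides of a geodesic triangle as an orientation of $K_3$. If that orientation is acyclic, then the induced linear order on the three endpoints exhibits two consecutive sides whose composition is parallel to the remaining one, so the triangle is transitive and hence already $\delta$-thin by hypothesis; the same is true, even more easily, for degenerate triangles. Thus it suffices to handle a \emph{cyclic} geodesic triangle, i.e.\ one whose sides form a directed cycle $a\overset{P}{\to}b\overset{Q}{\to}c\overset{R}{\to}a$. For such a triangle the ``$3\delta$-thin'' requirement unwinds into exactly three statements, each obtained from the next by the cyclic substitution $P\mapsto Q\mapsto R\mapsto P$, so it is enough to prove
\[
P\subseteq\cB^+_{3\delta}(R)\cup\cB^-_{3\delta}(Q).
\]

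Fix a point $x$ on $P$. All distances used below are finite (for instance $d(a,c)\leq\ell(P)+\ell(Q)$ and $d(c,b)\leq\ell(R)+\ell(P)$), so the auxiliary geodesics exist. Pick a geodesic $H$ from $c$ to $b$; then the triangle with vertices $a,b,c$ and sides $R\colon c\to a$, $P\colon a\to b$, $H\colon c\to b$ is transitive (the first two compose to a path parallel to $H$), so by hypothesis $P\subseteq\cB^+_\delta(R)\cup\cB^-_\delta(H)$. If $x\in\cB^+_\delta(R)$ we are done, so assume $d(x,h)\leq\delta$ for some $h$ on $H$. Comparing $H$ with $Q$ — which, since the two run between the same pair of vertices, is done through the degenerate transitive triangle with vertices $b,c,b$ and sides $Q\colon b\to c$, $H\colon c\to b$ and the constant path at $b$ — gives $H\subseteq\cB^+_\delta(Q)\cup\{y:d(y,b)\leq\delta\}$. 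In the second alternative $d(x,b)\leq d(x,h)+d(h,b)\leq2\delta$, and since $b$ lies on $Q$ we get $x\in\cB^-_{2\delta}(Q)$. The first alternative, where $h$ is reachable from $Q$, is disposed of by one further appeal to the hypothesis relating $H$ to $R$ and $P$ (on the first transitive triangle, together with at most one more auxiliary geodesic). Since each use of the $\delta$-thin hypothesis contributes at most one $\delta$ and at most three are chained, the constant comes out as $3\delta$.

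The only real content beyond this bookkeeping is the asymmetry of $d$. Over a genuine metric space the out-ball and the in-ball around the splitting geodesic $H$ coincide and one simply ``walks across'' the subdivided triangle; here a point that \emph{can reach} $H$ need not be \emph{reachable from} $H$, so every inclusion furnished by $\delta$-thinness carries a direction, and each concatenation of estimates must compose distances the correct way round. This is what forces the small case distinction according to which of the two balls around $H$ the point $h$ lands in, and it is the reason the auxiliary triangles — including the degenerate ones, which are still transitive and hence $\delta$-thin — must be chosen so that in every surviving branch the resulting chain of $\delta$-steps runs from $R$ towards $x$, or from $x$ towards $Q$. I expect this direction-chasing to be the main obstacle; the remaining verifications are routine.
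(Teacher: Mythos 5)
Your architecture is essentially the paper's: reduce to cyclic triangles, insert an auxiliary geodesic $H$ parallel to the composition of two sides, use one transitive triangle to push a point of $P$ onto $R$ or $H$, and then compare $H$ with $Q$ through degenerate transitive triangles. The acyclic reduction, the finiteness of the required distances, the first transitive triangle, and your ``second alternative'' ($d(h,b)\le\delta$) are all fine. The gap is the ``first alternative'', and nothing you propose closes it. There you know $d(x,h)\le\delta$ and $d(q,h)\le\delta$ for some $q$ on $Q$; both inequalities point \emph{into} $h$, so they do not compose, and you obtain no bound on $d(x,q)$, on $d(q,x)$, or on $d(\rho,x)$ for $\rho$ on $R$. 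The ``further appeal to the hypothesis relating $H$ to $R$ and $P$'' yields $H\subseteq\cB^+_\delta(R)\cup\cB^-_\delta(P)$, i.e.\ either $d(\rho,h)\le\delta$ or $d(h,p)\le\delta$; the first again has the wrong orientation to compose with $d(x,h)\le\delta$, and the second only returns you to $P$, where $x$ already lies. No chain of estimates of these orientations can manufacture a directed path from $x$ to $Q$ or from $R$ to $x$, which is what membership in $\cB^+_{3\delta}(R)\cup\cB^-_{3\delta}(Q)$ requires.

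The repair is to run the $H$-versus-$Q$ comparison through the \emph{other} degenerate transitive triangle, the one with the constant path at $c$ (sides $H\colon c\to b$, $Q\colon b\to c$ and $\mathrm{triv}_c$; here $HQ$ is parallel to $\mathrm{triv}_c$). Thinness applied to $H$ gives $H\subseteq\cB^+_\delta(c)\cup\cB^-_\delta(Q)$. Since $H$ is a geodesic issuing from $c$, only its initial subpath up to $H(\delta)$ lies in $\cB^+_\delta(c)$, and each point of that subpath is at forward distance at most $\delta$ from $H(\delta)\in\cB^-_\delta(Q)$; hence \emph{every} point $h$ of $H$ satisfies $d(h,q)\le 2\delta$ for some $q$ on $Q$. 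Now the chain closes with no case distinction on $h$: $d(x,q)\le d(x,h)+d(h,q)\le 3\delta$, so $x\in\cB^-_{3\delta}(Q)$. This two-sided comparison of a geodesic with a reverse geodesic, obtained from both degenerate triangles together with the geodesic property of the path being compared, is precisely the content of the first paragraph of the paper's proof; your version used only the degenerate triangle at $b$, which (after the same geodesic upgrade) yields $H\subseteq\cB^+_{2\delta}(Q)$ --- the containment of the unusable orientation.
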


\begin{proof}
Let $x,y\in X$ such that there are geodesics $P$ and~$Q$ from~$x$ to~$y$ and from~$y$ to~$x$, respectively.
Let $R$ be the trivial directed path with image~$x$.
Then $P,Q,R$ form the sides of a transitive geodesic triangle.
So $P$ lies in $\cB^+_\delta(R)\cup \cB^-_\delta(Q)$.
Since $P$ is a geodesic, only its directed subpath with starting point $x$ and end point $P(\delta)$ is contained in $\cB^+_\delta(R)$.
The remaining part of~$P$ lies in $\cB^-_\delta(Q)$ and thus all of $P$ lies in $\cB^-_{2\delta}(Q)$.
Similarly, if we take the third side as trivial directed path~$y$, we obtain that $P$ lies in $\cB^+_{2\delta}(Q)$.

Now let us consider an arbitrary geodesic triangle that is not transitive and let $P,Q,R$ be its sides and $x,y,z$ be its end points such that $P$ is an $x$-$y$ geodesic, $Q$ is a $y$-$z$ geodesic and $R$ is a $z$-$x$ geodesic.
Then $P$ and~$Q$ are composable and hence there is an $x$-$z$ geodesic~$S$.
By our previous argumentation, we know that $R$ lies in $\cB^-_{2\delta}(S)\cap\cB^+_{2\delta}(S)$.
Since the geodesic triangle with sides $P,Q,S$ is $\delta$-thin, we obtain that $R$ lies in ${\cB^-_{3\delta}(P)\cap\cB^+_{3\delta}(Q)}$.
\end{proof}

Contrary to metric spaces, in a \hm\ space $X$, the lengths of geodesics with starting and end point in $\cB^+_r(x)$ for $x\in X$ and $r\in R$ need not be bounded.
(In metric spaces, this is bounded by~$2r$.)
However, we will often restrict ourselves to situations, where this is satisfied.
We define the following two properties.

\begin{txteqtag}{B1}\label{itm_Bounded1}
There exists a function $f\colon \R\to\R$ such that for every $x\in X$, for every $r\geq 0$ and for all $y,z\in \cB^+_r(x)$ the distance $d(y,z)$ is either $\infty$ or bounded by $f(r)$.
\end{txteqtag}
\begin{txteqtag}{B2}\label{itm_Bounded2}
There exists a function $f\colon \R\to\R$ such that for every $x\in X$, for every $r\geq 0$ and for all $y,z\in \cB^-_r(x)$ the distance $d(y,z)$ is either $\infty$ or bounded by $f(r)$.
\end{txteqtag}

These properties are satisfied in several applications as we will see now.
As mentioned above, if $X$ is also a metric space, then it satisfies (\ref{itm_Bounded1}) and (\ref{itm_Bounded2}) for the function $f(r)=2r$.

\begin{lem}\label{lem_bddDegImpliesB1B2}
Every hyperbolic digraph of bounded in- and bounded out-degree satisfies (\ref{itm_Bounded1}) and~(\ref{itm_Bounded2}).
\end{lem}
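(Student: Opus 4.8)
The plan is to exploit hyperbolicity together with bounded degree to show that two vertices $y,z$ lying in a common out-ball $\cB^+_r(x)$ cannot be too far apart unless they are at infinite distance. Fix a hyperbolic digraph $D$ with all in- and out-degrees at most $\Delta$, and let $\delta$ be a hyperbolicity constant for the associated \hm\ space. Given $x\in V(D)$, $r\geq 0$ and $y,z\in\cB^+_r(x)$ with $d(y,z)<\infty$, choose geodesics $P$ from $x$ to $y$, and $Q$ from $x$ to $z$, and a geodesic $S$ from $y$ to $z$; note $\ell(P),\ell(Q)\leq r$. Then $P,Q,S$ form a geodesic triangle (the starting point of $S$ is the end point of $P$, the end point of $S$ is the end point of $Q$), so by $\delta$-thinness $S\sub\cB^+_\delta(P)\cup\cB^-_\delta(Q)$.

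The key step is to bound $\ell(S)$. Every point of $S$ lies within out-distance $\delta$ of a point of $P$ or within in-distance $\delta$ of a point of $Q$. Since $P$ and $Q$ have length at most $r$, the number of vertices on $P\cup Q$ is at most $2r+2$. In a digraph with out-degrees bounded by $\Delta$, a ball $\cB^+_\delta(v)$ around a single vertex $v$ contains at most $1+\Delta+\cdots+\Delta^{\lceil\delta\rceil}$ vertices; similarly for $\cB^-_\delta(v)$ using the in-degree bound. Hence $\cB^+_\delta(P)\cup\cB^-_\delta(Q)$ contains at most $N(r,\delta,\Delta):=(2r+2)\cdot 2\sum_{i=0}^{\lceil\delta\rceil}\Delta^i$ vertices. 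Since $S$ is a geodesic, it is in particular a proper directed path, so all its vertices are distinct; therefore $S$ has at most $N(r,\delta,\Delta)$ vertices and $\ell(S)\leq N(r,\delta,\Delta)$. As $\ell(S)=d(y,z)$, we get $d(y,z)\leq N(r,\delta,\Delta)$, and setting $f(r):=N(r,\delta,\Delta)$ establishes (\ref{itm_Bounded1}).

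For (\ref{itm_Bounded2}) we run the symmetric argument: given $y,z\in\cB^-_r(x)$ with $d(y,z)<\infty$, take geodesics $P$ from $y$ to $x$ and $Q$ from $z$ to $x$ (each of length at most $r$) and a geodesic $S$ from $y$ to $z$. Then $S,P,Q$ form a geodesic triangle — the starting point of $S$ equals the starting point of $P$ and the end point of $S$ equals the starting point of $Q$ — so $\delta$-thinness of $S$ gives $S\sub\cB^+_\delta(P)\cup\cB^-_\delta(Q)$, and the same counting bound applies verbatim, yielding the same $f$.

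The only genuine subtlety is purely bookkeeping: checking that in each case the three chosen geodesics really do form a triangle in the sense of Section~\ref{sec_thin}, i.e.\ that the relevant endpoints of the side $S$ being covered match up with endpoints of the other two sides as required by the definition of $\delta$-thin. Beyond that, everything is elementary counting in bounded-degree digraphs, so I do not expect any real obstacle; one should just be a little careful that the geodesic $S$ one covers is indeed a proper directed path (which it is, since a directed path that repeated a vertex would contain a directed subcycle and hence fail the geodesic condition $d(u,v)=\ell(uSv)$), and that $\lceil\delta\rceil$ is a legitimate radius bound in a digraph where distances are integers.
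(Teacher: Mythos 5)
Your proof is correct and follows essentially the same route as the paper: form the geodesic triangle with apex $x$ and the two geodesics from (or to) $x$, apply $\delta$-thinness to the side joining $y$ and $z$, and count vertices in the bounded-degree balls around the other two sides, using that a geodesic cannot repeat a vertex. The only differences are cosmetic (naming of the sides, an explicit verification of the symmetric case (\ref{itm_Bounded2}) that the paper leaves implicit, and a slightly more generous constant).
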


\begin{proof}
Let $D$ be a $\delta$-hyperbolic digraph of in- and out-degree at most~$\rho$ and let $x\in V(D)$.
Then the number of vertices in $\cB^+_k(x)$ and the number of vertices in $\cB^-_k(x)$ is bounded by $f_k:=\sum_{i=0}^k(\rho-1)^i$.
Let $P$ be a geodesic with starting and end vertex in $\cB^+_k(x)$.
Let $Q$ be a geodesic from~$x$ to the starting vertex of~$P$ and let $R$ be a geodesic from~$x$ to the end vertex of~$P$.
So $P,Q,R$ form the sides of a geodesic triangles.
Since this is $\delta$-thin, each vertex of~$P$ lies either in a ball $\cB^+_\delta(y)$ for some vertex $y$ on~$Q$ or in $\cB^-_\delta(z)$ for some vertex $z$ on~$R$.
In these balls, there are at most $k f_\delta+kf_\delta$ many vertices.
Thus, this number is also an upper bound on the length of~$P$.
\end{proof}

It follows from Lemma~\ref{lem_bddDegImpliesB1B2} that locally finite transitive\footnote{Recall that a digraph is \emph{transitive} if for all $x,y\in V(D)$ there is an automorphism $\alpha$ of~$D$ that maps $x$ to~$y$.} hyperbolic digraphs satisfy (\ref{itm_Bounded1}) and (\ref{itm_Bounded2}).
Another class of digraphs satisfying (\ref{itm_Bounded1}) and (\ref{itm_Bounded2}) are Cayley digraphs of finitely generated right cancellative hyperbolic semigroups, see Section~\ref{sec_semigroups} for more details.

A major property that follows from (\ref{itm_Bounded1}) and (\ref{itm_Bounded2}) is formulated in our next propositions.
Note that, by Lemma~\ref{lem_bddDegImpliesB1B2}, that proposition is a generalization of a result of Gray and Kambites \cite[Lemma 3.1]{GK-HyperbolicDigraphsMonoids}.

\begin{prop}\label{prop_reverseDistanceShortDelta1}
Let $\delta\geq 0$ and let $X$ be a $\delta$-hyperbolic geodesic \hm\ space that satisfies (\ref{itm_Bounded1}) for the function $f\colon \R\to\R$ and (\ref{itm_Bounded2}) for the function~$g\colon \R\to\R$.
\begin{enumerate}[\rm (i)]
\item\label{itm_reverseDistanceShortDelta1_1} If $P,Q,R$ are the sides of a geodesic triangle such that the starting point of~$P$ is either the starting or the end point of~$Q$ and the end point of~$P$ is either the starting or the end point of~$R$, then we have
\[
\ell(P)\leq (\ell(Q)/\varepsilon)f(\delta+\varepsilon)+(\ell(R)/\varepsilon)g(\delta+\varepsilon)
\]
for all $\varepsilon>0$.
\item\label{itm_reverseDistanceShortDelta1_2} If $x,y \in X$ with $d(x,y)\neq\infty$ and $d(y,x)\neq\infty$, then we have
\[
d(x,y)\leq (d(y,x)/\varepsilon) f(\delta+\varepsilon) + g(\delta)
\]
and
\[
d(x,y)\leq (d(y,x)/\varepsilon) g(\delta+\varepsilon)+f(\delta)
\]
for all $\varepsilon>0$.
\end{enumerate}
\end{prop}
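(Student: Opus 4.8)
The plan is to establish~(\ref{itm_reverseDistanceShortDelta1_1}) by a covering argument and then to deduce~(\ref{itm_reverseDistanceShortDelta1_2}) by specialising it to a degenerate triangle. For~(\ref{itm_reverseDistanceShortDelta1_1}), note first that the incidence hypothesis on $P,Q,R$ is exactly the one in the definition of a $\delta$-thin triangle, so $\delta$-hyperbolicity of~$X$ yields $P\subseteq\cB^+_\delta(Q)\cup\cB^-_\delta(R)$. Fix $\varepsilon>0$. Using that along a geodesic the forward distance between two of its points equals the length of the subpath joining them, choose net points $q_0,\dots,q_{N-1}$ on~$Q$ with $N=\lceil\ell(Q)/\varepsilon\rceil$ so that for every point~$q$ of~$Q$ there is $j$ with $d(q_j,q)\le\varepsilon$, and net points $r_0,\dots,r_{M-1}$ on~$R$ with $M=\lceil\ell(R)/\varepsilon\rceil$ so that for every point~$r$ of~$R$ there is $k$ with $d(r,r_k)\le\varepsilon$. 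A single triangle-inequality step then upgrades the inclusion above to $P\subseteq\bigcup_{j}\cB^+_{\delta+\varepsilon}(q_j)\cup\bigcup_{k}\cB^-_{\delta+\varepsilon}(r_k)$.

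Write $[a,b]$ for the domain of~$P$ and set $S_j:=P^{-1}(\cB^+_{\delta+\varepsilon}(q_j))$ and $S'_k:=P^{-1}(\cB^-_{\delta+\varepsilon}(r_k))$; the last inclusion says that the finitely many closed intervals $[\inf S_j,\sup S_j]$ and $[\inf S'_k,\sup S'_k]$ (over the nonempty ones) cover $[a,b]$. Since $P$ is a geodesic, the length function $L(t):=\ell(P|_{[a,t]})$ is nondecreasing and continuous — right-continuity because $P$ is continuous for the forward topologies, left-continuity because it is continuous for the backward topologies — and $\ell(P|_{[c,d]})=L(d)-L(c)=d(P(c),P(d))$ for $c\le d$. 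Consequently $\ell(P|_{[\inf S_j,\sup S_j]})=\sup\{d(P(s),P(t))\mid s\le t\text{ in }S_j\}$; any such $P(s),P(t)$ lie in $\cB^+_{\delta+\varepsilon}(q_j)$ and are joined by the subpath $P|_{[s,t]}$, so~(\ref{itm_Bounded1}) forces $d(P(s),P(t))\le f(\delta+\varepsilon)$, whence $\ell(P|_{[\inf S_j,\sup S_j]})\le f(\delta+\varepsilon)$; symmetrically $\ell(P|_{[\inf S'_k,\sup S'_k]})\le g(\delta+\varepsilon)$ by~(\ref{itm_Bounded2}). Finally extract from these intervals a finite subcover of the compact interval $[a,b]$, order it into an overlapping chain, and walk along the chain: telescoping~$L$ gives $\ell(P)=L(b)-L(a)\le Nf(\delta+\varepsilon)+Mg(\delta+\varepsilon)$, which (up to the harmless rounding in $N,M$) is the asserted estimate.

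For~(\ref{itm_reverseDistanceShortDelta1_2}) we may assume $x\ne y$. Apply the argument of~(\ref{itm_reverseDistanceShortDelta1_1}) to the (possibly degenerate) geodesic triangle whose sides are an $x$-$y$ geodesic~$P$, a $y$-$x$ geodesic of length $d(y,x)$ (in the role of~$Q$), and the trivial path at~$y$ (in the role of~$R$): the trivial side contributes only the single net point~$y$, for which no $\varepsilon$-slack is needed, so the corresponding piece — the final part of~$P$ lying in $\cB^-_\delta(y)$ — has length at most $g(\delta)$ by~(\ref{itm_Bounded2}); one obtains $d(x,y)=\ell(P)\le(d(y,x)/\varepsilon)f(\delta+\varepsilon)+g(\delta)$. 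Using instead the trivial path at~$x$ as the $Q$-side and a $y$-$x$ geodesic as the $R$-side gives the other inequality in the same way, with the piece near~$x$ bounded by $f(\delta)$ via~(\ref{itm_Bounded1}).

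The step I expect to be the main obstacle is the covering-and-summing part of~(\ref{itm_reverseDistanceShortDelta1_1}). Because the two directions of the semimetric behave differently, one must consistently pair out-balls with~(\ref{itm_Bounded1}) on the $Q$-side but in-balls with~(\ref{itm_Bounded2}) on the $R$-side; and, unlike in the metric case, the first or last point of~$P$ inside one of these balls need not itself lie in the ball. It is the continuity of the length function~$L$, together with compactness of $[a,b]$ in the usual topology, that nonetheless lets one bound the length of each such subpath of~$P$ and then legitimately add these bounds along~$P$.
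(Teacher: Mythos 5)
Your proposal is correct and follows essentially the same route as the paper: part (i) via the $\delta$-thinness inclusion, an $\varepsilon$-net of points along $Q$ and $R$, and bounding the length of the smallest subpath of $P$ meeting each enlarged ball by $f(\delta+\varepsilon)$ resp.\ $g(\delta+\varepsilon)$ using (B1)/(B2); part (ii) by specialising to a triangle with a trivial side, contributing only $g(\delta)$ or $f(\delta)$. The extra care you take with the continuity of the length function and the $\lceil\cdot\rceil$ rounding only makes explicit what the paper leaves implicit.
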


\begin{proof}
Let $\varepsilon>0$.
By assumption, $P$ lies in $\cB^+_\delta(Q)\cup\cB^-_\delta(R)$.
Let $a$ be a point on~$Q$ and let $u=P(r)$, $v=P(R)$  in $\cB^+_{\delta+\varepsilon}(a)$ such that $r<R$.
By (\ref{itm_Bounded1}), we have $d(u,v)\leq f(\delta+\varepsilon)$.
All points on~$P$ that lie in $\cB^+_\delta(Q)$ lie in the union of all $\cB^+_{\delta+\varepsilon}(Q(i\varepsilon))$ with $i\in\N$ and $i\varepsilon\leq \ell(Q)$.
For each $i\leq \ell(Q)/\varepsilon$, let $P_i$ be a smallest subpath of~$P$ containing all points of~$P$ that lie in $\cB^+_{\delta+\varepsilon}(Q(i\varepsilon))$.
The lengths of these paths sum up to at most $(\ell(Q)/\varepsilon)f(\delta+\varepsilon)$.
Analogously, the lengths of the smallest subpaths of~$P$ that contain all point on~$P$ that lie in $\cB^-_{\delta+\varepsilon}(R(i\varepsilon))$ sum up to at most $(\ell(R)/\varepsilon)g(\delta+\varepsilon)$.
This proves~(\ref{itm_reverseDistanceShortDelta1_1}).

Finally, (\ref{itm_reverseDistanceShortDelta1_2}) follows directly from (\ref{itm_reverseDistanceShortDelta1_1}): we just choose $P$ to be an $x$-$y$ geodesic and in the first case $R$ to be the trivial path $y$ and $Q$ a $y$-$x$ geodesic and in the second case $R$ to be a $y$-$x$ geodesic and $Q$ the trivial path~$x$.
Note that it suffices to take $g(\delta)$ or $f(\delta)$ for the trivial path.
\end{proof}

In our last result in this section, we prove that we can control that side in a transitive triangle that is parallel to the composition of the other two even further than the definition of a $\delta$-thin triangle indicates.

\begin{lem}\label{lem_parallelSideCloseToAndFrom}
Let $\delta\geq0$ and let $X$ be a $\delta$-hyperbolic geodesic \hm\ space that satisfies (\ref{itm_Bounded1}) and (\ref{itm_Bounded2}) for the function $f\colon\R\to\R$.
Let $P,Q,R$ be the sides of a geodesic triangle such that $P$ and~$Q$ are composable and their composition is parallel to~$R$.
Then $R$ lies in the out-ball of radius $6\delta+2\delta f(\delta+1)$ around $P\cup Q$ and in the in-ball of the same radius around $P\cup Q$.
\end{lem}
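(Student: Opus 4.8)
The plan is to use that the triangle $P,Q,R$ is $\delta$-thin in the sense defined above, so every point $w$ on $R$ lies in $\cB^+_\delta(P\cup Q)$ or in $\cB^-_\delta(P\cup Q)$; the content of the lemma is to upgrade ``$\cB^+$ \emph{or} $\cB^-$'' to ``$\cB^+$ \emph{and} $\cB^-$'' at the cost of enlarging the radius. The key idea is that $R$ is a geodesic, so along $R$ the points that are out-close to $P\cup Q$ and the points that are in-close to $P\cup Q$ form two parts covering $R$, and we only need to understand what happens near where these two parts meet, together with the behaviour at the endpoints of $R$.

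First I would fix a point $w=R(s)$ on $R$ and treat the two cases according to whether $w\in\cB^+_\delta(P\cup Q)$ or $w\in\cB^-_\delta(P\cup Q)$; by symmetry between $\cB^+$ and $\cB^-$ and between $P$ and $Q$ (using that $P,Q$ play symmetric roles once we reverse orientation, and that $f$ bounds both (B1) and (B2)), it suffices to bound $d(w,P\cup Q)$ assuming $w\in\cB^-_\delta(P\cup Q)$, say $d(w,p)\le\delta$ for some $p$ on $P$. I want to walk from $w$ toward the start of $R$ (which is the start of $P$) along $R$ until I reach the first point $w'$ that is out-close to $P\cup Q$, i.e. $w'\in\cB^+_{\delta}(P\cup Q)$; such a $w'$ exists because the start of $R$ equals the start of $P$ and hence is in $\cB^+_0(P)$. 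Now $w'$ and $w$ both lie in a small in-ball: more precisely, $w'$ is the last point before $w$ that fails to be in-close, so the segment of $R$ strictly between $w'$ and $w$ lies in $\cB^-_\delta(P\cup Q)$, but this does not immediately bound $\ell(w'Rw)$. Instead I would argue directly: $w'\in\cB^+_\delta(p_1)$ for some $p_1$ on $P$ (or on $Q$) and $w\in\cB^-_\delta(p)$ for some $p$ on $P$ (or $Q$); combining, $p_1$ reaches $w'$ in $\le\delta$ steps, $w'$ reaches $w$ along $R$, and $w$ reaches $p$ back in $\le\delta$ steps — but we still need a bound on $\ell(w'Rw)$. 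The clean way is to use Proposition~\ref{prop_reverseDistanceShortDelta1}(\ref{itm_reverseDistanceShortDelta1_2}) applied to the pair $(p_1,w')$ type configuration, or more simply: $d(w',w)\le\ell(w'Rw)$ but also, since $R$ is a geodesic, $\ell(w'Rw)=d(w',w)$, so it suffices to bound $d(w',w)$; and $d(w',w)\le d(w',p_1)+d(p_1,?)$ — here one needs to relate $p_1$ and $p$, which are two points on $P\cup Q$ both within the relevant balls of $w',w$.

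The way I would actually close the gap is the following. Since $w'\in\cB^+_\delta(P\cup Q)$ and $w'$ is the \emph{last} such point on $R$ before $w$, every point of $R$ strictly between $w'$ and $w$ is in $\cB^-_\delta(P\cup Q)$ but \emph{not} in $\cB^+_\delta(P\cup Q)$; however, the start of $R$ is the start of $P$, so on $w'Rw$ the points near $w'$ are out-close — this gives a ``first exit'' structure. Pick the point $p_1$ on $P\cup Q$ with $d(p_1,w')\le\delta$. I claim $d(w,p_1)$ or $d(p_1,w)$ is small: indeed $w\in\cB^-_\delta(p)$, and both $p$ and $p_1$ lie on the geodesic $P$ (or on $P\cup Q$), so $d^\lrarrow(p,p_1)$ is governed by Proposition~\ref{prop_reverseDistanceShortDelta1}(\ref{itm_reverseDistanceShortDelta1_2}) once we know one of $d(p,p_1),d(p_1,p)$ is finite and bounded. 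Concretely: along $R$, $w'\to w$ is a geodesic segment; along $P\cup Q$, we go $p_1\to \cdots \to p$; and $d(p_1,w')\le\delta$, $d(w,p)\le\delta$. If $p_1$ precedes $p$ on $P$, then $d(p_1,p)=\ell(p_1 P p)$ and $d(p_1,w)\le d(p_1,p)+d(p,w)$... this needs $d(p,w)$, not $d(w,p)$, which is exactly where asymmetry bites. Here I invoke Proposition~\ref{prop_reverseDistanceShortDelta1}(\ref{itm_reverseDistanceShortDelta1_2}) with $\varepsilon=1$: from $d(w,p)\le\delta$ we get $d(p,w)\le \delta f(\delta+1)+g(\delta)$, and since (B1),(B2) both hold for the same $f$ here, $d(p,w)\le \delta f(\delta+1)+f(\delta)\le 2\delta f(\delta+1)$ roughly. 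Similarly bounding the short pieces $d(w',w)$ via the triangle $p_1 w' \cdots w p_1$ (a geodesic triangle or quadrilateral split into triangles) and adding up the $O(\delta)$ contributions from $p_1\to w'$, the reversed $w\to p$, and the segment of $P\cup Q$ between $p_1$ and $p$ (whose length is controlled because both $p_1,p$ sit within bounded distance of the geodesic $R$), one reaches the stated bound $6\delta+2\delta f(\delta+1)$: three ``$\delta$-hops'' giving $\le 6\delta$ after one reversal via Proposition~\ref{prop_reverseDistanceShortDelta1}, and one ``reversed segment'' on a geodesic giving the $2\delta f(\delta+1)$ term.

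The main obstacle, and the place that needs genuine care rather than routine estimation, is precisely the step where we must bound $d(w',w)$ (equivalently $\ell(w'Rw)$): the thinness condition only tells us each individual point of $w'Rw$ is $\delta$-close \emph{from} $P\cup Q$, i.e. in some $\cB^-_\delta$, which a priori places no bound on how long $w'Rw$ can be. Resolving this is where one must exploit both that $R$ itself is a geodesic and that $w'$ was chosen as a first/last transition point so that $w'$ is \emph{out}-close while $w$ is \emph{in}-close, and then feed the pair into Proposition~\ref{prop_reverseDistanceShortDelta1} to convert the cheap ``in-close'' information at $w$ into usable ``out-distance'' information. Everything else — the case analysis $P$ vs.\ $Q$, the choice $\varepsilon=1$, summing the finitely many $O(\delta)$ and $O(\delta f(\delta+1))$ contributions, and the symmetric argument for the in-ball statement obtained by reversing all orientations — is bookkeeping.
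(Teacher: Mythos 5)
Your reduction is fine as far as it goes: by thinness of the triangle one has $R\sub\cB^+_\delta(P)\cup\cB^-_\delta(Q)$, and by the $\cB^+/\cB^-$ and $P/Q$ symmetry it suffices to bound $d(P\cup Q,w)$ for a point $w$ on~$R$ that is only known to satisfy $d(w,p)\le\delta$ for some $p$ on $P\cup Q$. But the step you yourself flag as the main obstacle --- bounding $\ell(w'Rw)=d(w',w)$, where $w'$ is the nearest $\delta$-out-close point of~$R$ before~$w$ --- is a genuine gap, and the tools you propose for closing it do not work. First, Proposition~\ref{prop_reverseDistanceShortDelta1}\,(\ref{itm_reverseDistanceShortDelta1_2}) cannot be applied to the pair $(w,p)$ to produce $d(p,w)\le\delta f(\delta+1)+f(\delta)$: that proposition has $d(p,w)<\infty$ as a hypothesis, and in a \hm\ space a directed path from a point $p$ of $P\cup Q$ back to an interior point $w$ of~$R$ need not exist at all (the guaranteed escape from $p$ runs along $P\cup Q$ only to the common end point of $Q$ and $R$, which does not reach~$w$). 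Exhibiting \emph{some} point of $P\cup Q$ with a bounded-length directed path to~$w$ is precisely the content of the lemma, so it cannot be fed in as a hypothesis. Second, the claim that $\ell(p_1Pp)$ is ``controlled because both $p_1,p$ sit within bounded distance of the geodesic $R$'' is circular: the only available estimate is $\ell(p_1Pp)=d(p_1,p)\le\delta+\ell(w'Rw)+\delta$, i.e.\ a bound in terms of the very quantity you are trying to control; likewise, feeding $w'Rw$ as a side into Proposition~\ref{prop_reverseDistanceShortDelta1}\,(\ref{itm_reverseDistanceShortDelta1_1}) only yields $\ell(w'Rw)\le(\ell(w'Rw)+O(\delta))f(\delta+1)$, which is vacuous since $f(\delta+1)\ge 1$. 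Note also that a uniform bound on $\ell(w'Rw)$ would say the $\delta$-out-close points are uniformly dense along~$R$, which is strictly stronger than the lemma (the lemma only makes every point $k$-out-close for the larger radius $k$), so the route is not merely unfinished but aimed at a statement there is no reason to expect.

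The paper closes the argument without any transition point on~$R$: given $x$ on~$R$ with $d(x,y)\le\delta$ for some $y$ on~$Q$, it applies thinness a \emph{second} time, now to the side~$Q$, to obtain a point $z$ on~$P$ or on~$R$ with $d(z,y)\le\delta$ or $d(y,z)\le\delta$, and then analyses small auxiliary geodesic triangles spanned by $x$, $y$, $z$ and the starting point of~$R$ (for instance the triangle on $a$, $x$, $y$ having the short $x$-$y$ geodesic as one side), using Proposition~\ref{prop_reverseDistanceShortDelta1}\,(\ref{itm_reverseDistanceShortDelta1_1}) on these triangles with $\varepsilon=1$ to bound the lengths of the offending subpaths; the sub-case in which $z$ lands on~$R$ beyond~$x$ is handled by stepping back $2\delta$ along~$R$ and repeating the analysis, which is where the extra $2\delta$ in the constant comes from. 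Some version of this second application of thinness is what your sketch is missing.
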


\begin{proof}
Set $k^*:=4\delta+2\delta f(\delta+1)$ and $k:=k^*+2\delta$.
Let $a$ be the starting and~$b$ be the end point of~$R$.
Let $x$ be a point on~$R$.
Then $x\in\cB^+_\delta(P)\cup\cB^-_\delta(Q)$.
If $x\in\cB^+_\delta(P)$, then $d(P,x)\leq \delta\leq k^*\leq k$.
So we may assume that there is a point~$y$ on~$Q$ with $d(x,y)\leq\delta$.
Then $y\in\cB^+_\delta(P)\cup\cB^-_\delta(R)$.

Let us first assume that there is a point $z$ on~$P$ with $d(z,y)\leq\delta$.
Let $P'$ be a $z$-$y$ geodesic, $Q'$ an $a$-$y$ geodesic and $R'$ an $x$-$y$ geodesic.
Then every point of~$Q'$ that lies in $\cB^-_\delta(P')$ lies in $\cB^-_{2\delta}(y)$.
Thus, $Q'$ lies completely in $\cB^+_{3\delta}(P)$.
Using Proposition~\ref{prop_reverseDistanceShortDelta1} with $\varepsilon=1$, at most the subpath of length $2\delta f(\delta+1)$ with end vertex~$x$ of $aRx$ lies in $\cB^-_\delta(R')$.
Thus, there exists $u$ on~$Q'$ with $d(u,x)\leq \delta+2\delta f(\delta+1)$ and hence we have
\[
d(P,x)\leq d(P,u)+d(u,x)\leq 4\delta+2\delta f(\delta+1)=k^*.
\]

Let us now assume that there is a point $z$ on~$R$ with $d(y,z)\leq\delta$.
If $z$ lies on $aRx$, then Proposition~\ref{prop_reverseDistanceShortDelta1} implies $d(z,x)\leq 2\delta f(\delta+1)$.
So we have
\[
d(y,x)\leq\delta+2\delta f(\delta+1)\leq k^*.
\]
Hence, we assume that $z$ lies on $xRb$.
If $d(a,x)\leq 2\delta$, we immediately obtain $d(P,x)\leq 2\delta\leq k$
Otherwise, let $x'$ be on~$aRx$ with $d(x',x)=2\delta$.
By an analogous situation for~$x'$ as for~$x$ either we have $d(P\cup Q,x')\leq k^*$ or there exists a point $v$ on $x'Rb$ with $d(Q,v)\leq\delta$ and $d(x',v)\leq 2\delta$.
Thus, we obtain $d(P\cup Q,x)\leq k$.

By a symmetric argument, we also obtain that $d(x,P\cup Q)\leq k$.
\end{proof}

\section{Examples of hyperbolic \hm\ spaces}\label{sec_ex}

We recall that for geodesic metric spaces $X$, the definition of hyperbolicity is as follows: $X$ is \emph{hyperbolic} if there exists $\delta\geq 0$ such that for all $x,y,z\in X$ and all $x$-$y$, $y$-$z$ and $x$-$z$ geodesics each of these geodesics lies in the $\delta$-neighbourhood of the other two geodesics.
It directly follows from the definitions that hyperbolic geodesic metric spaces are also hyperbolic when considering the space as a \hm\ space and using the definition from Section~\ref{sec_thin}.
This is similar to the observation of Gray and Kambites \cite[Section 2]{GK-HyperbolicDigraphsMonoids} that starting with a graph and replacing each edge by two directed edges that are oppositely oriented leads to a digraph that is hyperbolic if and only if the graph is hyperbolic when viewed as a metric space.

\medskip

Two points $x,y$ in a \hm\ space $X$ are \emph{equivalent}, if $d(x,y)<\infty$ and $d(y,x)<\infty$.
This is an equivalence relation whose classes are the strongly connected components of~$X$.
Gray and Kambites \cite[Proposition 2.5]{GK-HyperbolicDigraphsMonoids} showed that in hyperbolic digraphs, the strongly connected components are hyperbolic, too.
Their proof immediately carries over to \hm\ spaces, so we obtain the following.

\begin{prop}
Let $X$ be a $\delta$-hyperbolic geodesic \hm\ space.
Then every strongly connected component of~$X$ is $\delta$-hyperbolic.\qed
\end{prop}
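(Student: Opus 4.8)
The plan is to show that each strongly connected component $C$ of~$X$, equipped with the restriction of the semimetric of~$X$, is a geodesic \hm\ space all of whose geodesic triangles are $\delta$-thin. The point is that, once set up correctly, the geodesic triangles of~$C$ will be exactly the geodesic triangles of~$X$ whose end points lie in~$C$, so that $\delta$-thinness transfers for free from~$X$ to~$C$.

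First I would check that $C$ is geodesic. Let $x,y\in C$. Since $x$ and~$y$ are equivalent we have $d(x,y)<\infty$, so there is an $x$-$y$ geodesic~$P$ in~$X$. The only step that genuinely needs an argument is that $P$ lies entirely inside~$C$: for any point $u$ on~$P$ we have $d(x,u)=\ell(xPu)<\infty$ and $d(u,y)=\ell(uPy)<\infty$, and combining the latter with $d(y,x)<\infty$ (which holds because $x,y\in C$) via the triangle inequality gives $d(u,x)\leq d(u,y)+d(y,x)<\infty$; hence $u$ is equivalent to~$x$, i.e.\ $u\in C$. Thus $P$ is a directed path in~$C$, and it is a geodesic for the restricted semimetric because that semimetric is just $d$ restricted to $C\times C$.

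Next I would record the correspondence between geodesic triangles. A geodesic in~$C$ is automatically a geodesic in~$X$ (the two semimetrics agree on $C\times C$), and conversely, by the previous paragraph, every geodesic of~$X$ between two points of~$C$ lies in~$C$ and is a geodesic there. Hence the geodesic triangles of~$C$ are precisely the geodesic triangles of~$X$ all of whose end points — equivalently, all of whose sides — lie in~$C$. Moreover, for a directed path~$Q$ inside~$C$, the out-ball $\cB^+_\delta(Q)$ computed in~$C$ is the intersection with~$C$ of the corresponding ball in~$X$, and likewise for in-balls; so, since a side~$P$ of such a triangle lies in~$C$, the containment $P\sub\cB^+_\delta(Q)\cup\cB^-_\delta(R)$ holds in~$C$ if and only if it holds in~$X$.

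Putting the two steps together: any geodesic triangle in~$C$ is a geodesic triangle in~$X$, hence $\delta$-thin in~$X$ by hypothesis, hence $\delta$-thin in~$C$ by the ball-intersection remark, and this applies to each of the three possible choices of which side plays the role of~$P$ in the thinness condition. Therefore $C$ is $\delta$-hyperbolic. I do not expect any real obstacle here; the only place that requires care is the verification that a geodesic between two equivalent points stays inside their strongly connected component, and that is immediate from the triangle inequality as indicated above.
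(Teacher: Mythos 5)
Your proof is correct, and it is essentially the argument the paper has in mind: the paper omits the proof, citing Gray and Kambites' Proposition 2.5 for digraphs and noting that it "immediately carries over", and the carrying-over is exactly your observation that a geodesic between two equivalent points stays inside their strongly connected component (via the triangle inequality $d(u,x)\leq d(u,y)+d(y,x)<\infty$), so that geodesic triangles and the thinness condition restrict verbatim. No gaps.
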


\medskip

In the case of graphs, the $0$-hyperbolic graphs are the trees.
For digraphs, this is no longer the case: while Gray and Kambites \cite[Proposition 2.4]{GK-HyperbolicDigraphsMonoids} noted that digraphs whose underlying undirected graphs are trees are $0$-hyperbolic, there are more $0$-hyperbolic digraphs.
We will characterise the $0$-hyperbolic digraphs by using one property of trees: they have uniquely determined paths between every two vertices.

\begin{prop}
Let $D$ be a digraph.
Then $D$ is $0$-hyperbolic if and only if there are no $x,y\in V(D)$ with two distinct directed $x$-$y$ paths (which may be trivial).
\end{prop}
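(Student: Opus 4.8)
The plan is to prove both directions: the backward one directly, using Proposition~\ref{prop_transTriSuffices}, and the forward one by contraposition; throughout, $X$ denotes the \hm\ space associated with $D$. For the backward direction, suppose there are no two distinct directed paths between any ordered pair of vertices of $D$. Then $D$ has no loop, no multiple edge and no directed cycle (a directed cycle through a vertex $x$, together with the trivial path at $x$, would be two such paths). I would deduce that $X$ contains no closed directed path of positive length — such a path, whether based at a vertex or in the interior of an edge, traces a closed directed walk in $D$ through the head of that edge, hence a directed cycle — and consequently that between any two points of $X$ there is at most one directed path: two distinct ones would agree along a maximal initial segment which ends, since inside an edge a directed path cannot branch and since (after reparametrisation) such paths are injective, at a common vertex, and then they leave it along different edges, tracing two distinct directed walks in $D$ to the common terminal vertex. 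By Proposition~\ref{prop_transTriSuffices} it now suffices to verify $0$-thinness of transitive geodesic triangles. After relabelling, such a triangle has sides $P$ from $a$ to $b$, $Q$ from $b$ to $c$ and $R$ from $a$ to $c$ with $PQ$ parallel to $R$; then $R$ and $PQ$ are both directed $a$-$c$ paths, hence coincide, so $\Image P\cup\Image Q=\Image R$. Running through the three choices of distinguished side in the thinness condition, the required inclusion reads successively $\Image R\subseteq\Image P\cup\Image Q$, $\Image P\subseteq\Image R$ and $\Image Q\subseteq\Image R$, all of which hold. Hence every geodesic triangle is $0$-thin, so $D$ is $0$-hyperbolic.

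For the forward direction I argue contrapositively: assuming some ordered pair of vertices admits two distinct directed paths, I produce a geodesic triangle that is not $0$-thin. Choose such a pair $s,t$ with $d(s,t)$ minimal. If $d(s,t)=0$, then $D$ has a directed cycle; take a shortest one, $C$, of length $k\ge1$, view it as an embedded circle of circumference $k$ in $X$, and cut it at three points $p_0,p_1,p_2$ into arcs $A_0,A_1,A_2$ of length $k/3$, with $A_i$ running from $p_i$ to $p_{i+1}$ cyclically. Each $A_i$ is a geodesic: a strictly shorter directed path between two of its points, followed by the complementary part of $C$, would be a closed directed path in $X$ of positive length $<k$, yielding a directed cycle in $D$ shorter than $C$. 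Thus $A_0,A_1,A_2$ are the sides of a geodesic triangle which is not $0$-thin, since $\Image A_0$ meets $\Image A_1\cup\Image A_2$ only in $\{p_0,p_1\}$.

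Now suppose $d(s,t)=\ell^*\ge1$; then $D$ is acyclic, as a vertex on a directed cycle would give a pair with smaller distance. Fix a shortest $s$-$t$ path $S$, which is a geodesic, and a directed $s$-$t$ walk $W\ne S$ of smallest length. Letting $z$ be the last vertex up to which $S$ and $W$ coincide and $z'$ the first vertex after $z$ on $S$ that lies again on $W$, the segments $zSz'$ and $zWz'$ are distinct directed paths, so by minimality of $\ell^*$ the segment $zSz'$ has length $\ell^*=\ell(S)$; hence $z=s$ and $z'=t$, so $S$ and $W$ are internally vertex-disjoint and share no edge (a common edge would be $s\to t$ and would force $W=S$), whence $\Image S$ and $\Image W$ meet only in $\{s,t\}\subsetneq\Image S$. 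Next I would find an interior point $q$ of $W$ with both $sWq$ and $qWt$ geodesics: if $\ell(W)=\ell^*$ then $W$ is itself a geodesic and any interior point serves; otherwise, let $v_1$ be the last vertex of $W$ with $sWv_1$ a geodesic (so $v_1\ne t$) and $v_2$ the first with $v_2Wt$ a geodesic, and note that if $v_1$ preceded $v_2$ then the vertex $v_1'$ just after $v_1$ would satisfy $d(s,v_1')\le d(s,v_1)$, so replacing $sWv_1'$ by a shortest $s$-$v_1'$ path would produce a directed $s$-$t$ walk shorter than $W$ yet still distinct from $S$ (it passes through the interior vertex $v_1'$ of $W$), contradicting the choice of $W$; hence $v_2\le v_1$ and $q:=v_1$ works. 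Finally, the triangle with vertices $s,q,t$ and sides $sWq$, $qWt$, $S$ is a geodesic triangle, and with $S$ as distinguished side the thinness condition demands $\Image S\subseteq\cB^+_0(sWq)\cup\cB^-_0(qWt)=\Image(sWq)\cup\Image(qWt)=\Image W$, which fails; so $D$ is not $0$-hyperbolic.

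The main difficulty is the forward direction: one cannot simply take the two given directed paths between a pair of vertices as sides of a triangle, because the second need be neither a geodesic nor a concatenation of two geodesics. The two successive minimisations — first of $d(s,t)$, then of $\ell(W)$ — are precisely what make the decomposition of $W$ into two geodesic pieces available, and the delicate point is to verify that the shortened walk constructed along the way genuinely remains distinct from $S$. Once the two facts about $X$ (absence of positive-length closed directed paths; uniqueness of directed paths) are in hand, the backward direction is routine modulo the orientation bookkeeping in the transitive-triangle check.
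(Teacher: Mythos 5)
Your argument is correct, but the forward direction takes a genuinely different route from the paper's. The paper also produces a non-$0$-thin geodesic triangle, but it does so locally: for $x\neq y$ it fixes an $x$-$y$ geodesic $P$, picks an edge $e=uv$ of one of the two given paths that does not lie on $P$, and plays two small triangles against each other (one with end points $x,u,v$ and sides two geodesics from~$x$ together with~$e$, the other with end points $x,v,y$ and sides an $x$-$v$ geodesic containing~$e$, a $v$-$y$ geodesic and~$P$); the case $x=y$ is settled by an interior point of the first edge of a closed walk, a return geodesic and the trivial path. Your global double minimisation --- first of $d(s,t)$ over offending pairs, then of $\ell(W)$ over $s$-$t$ walks distinct from a fixed geodesic $S$ --- costs more set-up but buys a very clean endgame: $S$ and $W$ become internally disjoint, $W$ splits at $q$ into two geodesics, and the single triangle with sides $sWq$, $qWt$, $S$ visibly violates $0$-thinness because $S$ meets $sWq\cup qWt$ only in $\{s,t\}$; the trisected shortest cycle handles $s=t$ equally transparently. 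Two small points are worth writing out: the inequality $d(s,v_1')\le d(s,v_1)$ is cleanest via the observation that an initial segment of a walk between vertices is a geodesic if and only if its length equals the distance between its end vertices (interior points of edges are reachable only through the tail of their edge, so they cause no extra failures); and in the backward direction one should record that $\cB^+_0(Q)$ is exactly the image of~$Q$ (distance $0$ forces equality in a semimetric) and that uniqueness of directed paths holds in the $1$-complex, not only in~$D$. The backward direction is otherwise the paper's argument in light disguise: the paper shows directly that under the uniqueness hypothesis every geodesic triangle is transitive, whereas you reduce to transitive triangles via Proposition~\ref{prop_transTriSuffices}; both reductions work since $3\cdot 0=0$.
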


\begin{proof}
First, let us assume that $D$ is $0$-hyperbolic.
Let us suppose that there are $x,y\in V(D)$ with two distinct directed $x$-$y$ paths $P_1$ and~$P_2$.
First, let us assume that $x=y$.
Then we may assume that $P_1$ is trivial and that $P_2$ contains a vertex distinct from~$x$.
Let $z$ be an out-neighbour of~$x$ on~$P_2$ and let $a$ be a point on an edge~$e=xz$ distinct from~$x$ and~$z$.
Let $P$ be a $z$-$x$ geodesic.
Then $P$ does not contain the edge~$e$.
In particular, $a$ lies neither on~$P$ nor on the trivial directed path~$x$.
This is a contradiction to $0$-hyperbolicity.

So let us now assume that $x\neq y$ and that all vertices of~$P_1$ are distinct and all vertices of~$P_2$ are distinct.
Let $P$ be an $x$-$y$ geodesic.
We may assume that $P_1\neq P$.
Let $e=uv$ be an edge on~$P_1$ that does not lie on~$P$.
Let $Q_u^1$ and $Q_v^1$ be $x$-$u$ and $x$-$v$ geodesics, respectively, and let $Q_u^2$ and $Q_v^2$ be $u$-$y$ and $v$-$y$ geodesics, respectively.
If $e$ does not lie on $Q_v^1$, then the geodesic triangle with end vertices $x,u,v$ and sides $Q_u^1$, $Q_v^1$ and~$e$ contradicts $0$-hyperbolicity.
Thus, $e$ lies on $Q_v^1$ and the geodesic triangle with end vertices $x,v,y$ and sides $Q_v^1$, $Q_v^2$ and~$P$ contradicts $0$-hyperbolicity.
Thus, there exists a uniquely determined directed $x$-$y$ path.

Let us now assume that for all $x,y\in V(D)$ there exists a unique directed $x$-$y$ path.
If a geodesic triangle is not transitive, then there are two distinct directed $z$-$z$ paths for every end vertex of the triangle: one trivial one and the other one following all three sides of the triangle.
Thus, all geodesic triangles are transitive.
But then the composition of the composable sides must coincide with the third side and thus the triangle is $0$-thin.
\end{proof}

\section{Divergence of geodesics}\label{sec_DivGeo}

For geodesic metric spaces, hyperbolicity is equivalent to divergence of geodesics.
For \hm\ spaces, the analogue is not true.
But we will see that at least one direction holds under the assumptions of~(\ref{itm_Bounded1}) and~(\ref{itm_Bounded2}): hyperbolicity implies (exponential) divergence of geodesics.

Let us start by giving the definition of divergence of geodesics in the context of \hm\ spaces.
Let $X$ be a geodesic \hm\ space.
Let $P$ be an $x$-$y$ geodesic in~$X$ and let $0\leq R\leq d(x,y)$.
We denote by $P^x(R)$ be the point $u$ with $d(x,u)=R$ and we denote by $P^y(R)$ be the point $v$ with $d(v,y)=R$.

A function $e\colon \R\to\real$ is a \emph{divergence function} if for all $x\in X$, all geodesics $P_1,P_2$ that start or end at~$x$ and all $r, R\in\R$ the following holds: if $d(P_1^x(R),P_2)>e(0)$, then every directed $P_1$-$P_2$ path that lies outside of $\cB^+_{R+r}(x)\cup\cB^-_{R+r}(x)$ has length more than~$e(r)$.
We say that geodesics \emph{diverge (exponentially)} in~$X$ if there exists an (exponential) divergence function.
They diverge \emph{properly} if $e(r)\to\infty$ for $r\to\infty$.

\begin{prop}\label{prop_exponentialDivergence}
Let $X$ be a hyperbolic geodesic \hm\ space that satisfies (\ref{itm_Bounded1}) and~(\ref{itm_Bounded2}).
Then the geodesics diverge exponentially in~$X$.
\end{prop}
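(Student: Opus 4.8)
The plan is to mimic the classical argument for geodesic metric spaces, where one shows that if a path from $P_1^x(R)$ to $P_2$ stays outside the ball of radius $R+r$ around $x$, then iterating the thin-triangle condition forces the path to be exponentially long in $r$. The key adaptation needed here is that our spaces are only semimetric, so out-balls and in-balls must be tracked separately, and the bounds on "reverse distances" supplied by Proposition~\ref{prop_reverseDistanceShortDelta1} have to be invoked whenever we cross from a forward estimate to a backward one. Let $\delta\geq 0$ be such that $X$ is $\delta$-hyperbolic, and let $f$, $g$ be the functions witnessing (\ref{itm_Bounded1}) and (\ref{itm_Bounded2}); set once and for all the constant $C:=\max\{f(\delta+1),g(\delta+1)\}$ arising from Proposition~\ref{prop_reverseDistanceShortDelta1} with $\varepsilon=1$.

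First I would fix $x\in X$ and two geodesics $P_1,P_2$ starting or ending at $x$, together with a directed $P_1$--$P_2$ path $W$ of length $L$ that avoids $\cB^+_{R+r}(x)\cup\cB^-_{R+r}(x)$, where $d(P_1^x(R),P_2)>e(0)$ for a divergence function $e$ to be determined. The core of the argument is an annulus / midpoint step: consider the point $m$ on $W$ that is a "midpoint" in the sense that the two halves $W_1=P_1^x(R)\,W\,m$ and $W_2=m\,W\,P_2$ have roughly equal length $L/2$. Take a geodesic $S$ from $x$ to $m$ (or from $m$ to $x$, according to the orientations of $P_1,P_2$ — this case distinction, exactly as in Lemma~\ref{lem_Delta3ImpliesDelta1}, is where one must be careful). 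Form the geodesic triangle with end points $x$, $P_1^x(R)$, $m$: by $\delta$-thinness and Proposition~\ref{prop_reverseDistanceShortDelta1}(i), the geodesic from $x$ toward $P_1^x(R)$, which has length $\geq R$, must lie within distance $\delta$ (forward or backward) of the union of $S$ and the subpath $P_1^x(R)\,W\,m$; since $W_1$ stays outside the radius-$(R+r)$ annulus, the portion of that geodesic inside $\cB^+_R(x)$ cannot be shadowed by $W_1$, so it is shadowed by $S$. Quantifying this with the bounded-ball hypotheses shows that $S$ enters $\cB^+_{R+r-\text{const}}(x)$ only for a prefix whose length is bounded by a constant times $\ell(W_1)$; comparing with the requirement $d(P_1^x(R),P_2)>e(0)$ yields that $\ell(W_1)$ is bounded below by something growing with $r$, and in fact doubling $r$ roughly doubles the required length — this is the recursion $g(r)\geq 2\,g(r-r_0)$ for a fixed step $r_0$ depending only on $\delta$, $f$, $g$, which integrates to an exponential divergence function $e$.

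The main obstacle I anticipate is the bookkeeping of which estimates are "forward" and which are "backward", since in a transitive-style triangle one of the three sides is parallel to the composition of the other two and the orientation of $P_1$ and $P_2$ (toward or away from $x$) produces four cases; in each case one must route a directed path between the relevant shadow points and then pay the price of Proposition~\ref{prop_reverseDistanceShortDelta1}(ii) to convert its length into a usable bound, exactly the manoeuvre already carried out in the proof of Lemma~\ref{lem_Delta3ImpliesDelta1}. A secondary technical point is that, because geodesics here are not isometric images of intervals, the "midpoint" $m$ and the points $P_i^x(R)$, $P_i^y(R)$ must be defined via the distance-from-$x$ parametrisation as in the paragraph preceding the statement, and one should check that these are well defined along $W$ up to the additive constant $\delta$; this costs nothing beyond absorbing $\delta$ into the constants. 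Once the recursion $e(2r)\geq 2e(r)-\text{const}$ (equivalently $e(r+r_0)\geq 2e(r)$ after enlarging the step) is established with $r_0$ depending only on $\delta,f,g$, the function $e(r):=2^{\lfloor r/r_0\rfloor}$, suitably scaled by $e(0)$, is the desired exponential divergence function, completing the proof.
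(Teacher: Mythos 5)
Your overall plan---reduce to a dyadic subdivision of the connecting path, pay a fixed additive cost per level, and use Proposition~\ref{prop_reverseDistanceShortDelta1} whenever a forward estimate must be converted into a backward one---is the right shape and matches the paper's argument. But the central step of your subdivision has a genuine gap: you propose to join $x$ to the midpoint $m$ of the path $W$ by a geodesic $S$ ``from $x$ to $m$, or from $m$ to $x$, according to the orientations''. In a \hm\ space neither option need exist. The definition of a divergence function allows all four orientation combinations, and in the mixed case where $P_1$ ends at $x$ and $P_2$ starts at $x$, the interior of a directed $P_1$-$P_2$ path can be invisible from $x$ in both directions: from $x$ one can only move forward along $P_2$, and from $m$ one can only move forward along $W$ and then along $P_2$ away from $x$, so $d(x,m)=d(m,x)=\infty$ (take, e.g., an anti-ray into $x$, a ray out of $x$, and a fresh path joining two far-out points of these). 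A second problem is that the triangle with vertices $x$, $P_1^x(R)$, $m$ whose third side runs along $W$ is not a geodesic triangle, so $\delta$-thinness cannot be applied to it, and the starting point of $W$ need not be $P_1^x(R)$ in any case.

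The paper's proof avoids both difficulties. The only object it ever joins to $x$ is a single geodesic $Q$ between the two \emph{endpoints} $a,b$ of $W$; these lie on $P_1$ and $P_2$, so they are always connected to $x$ in some direction, and a four-case orientation analysis (the analogue of your appeal to Lemma~\ref{lem_Delta3ImpliesDelta1}) shows that $Q$ comes within about $R+2\delta$ of $x$, forwards or backwards as appropriate. The dyadic subdivision then takes place entirely among geodesics between points \emph{of $W$}---these always exist, being witnessed by subpaths of $W$---and closeness to $x$ is transported down the subdivision tree by Lemma~\ref{lem_parallelSideCloseToAndFrom} at an additive cost of $k=6\delta+2\delta f(\delta+1)$ per level; after $\log_2\ell(W)$ levels one lands on $W$ itself, which lies outside radius $R+r$, giving $r\leq 2\delta+1+k\log_2\ell(W)$ and hence the exponential bound. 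If you reorganise your midpoint step in this way, the rest of your sketch (the doubling recursion and the final choice of $e$) goes through.
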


\begin{proof}
Let $\delta\geq 0$ such that $X$ is $\delta$-hyperbolic.
Let $x\in X$, let $P_1,P_2$ be two geodesics that have $x$ as their starting or end point and let $f\colon\R\to\R$ be a function such that $X$ satisfies (\ref{itm_Bounded1}) and~(\ref{itm_Bounded2}) for~$f$.
Let $\varepsilon >0$.
Set
\[
e(0):=(2\delta+\varepsilon+1) f(\delta+1)+ f(\delta)+\delta
\]
and, for $r>0$ and $k:=6\delta+2\delta f(\delta+1)$, set
\[
e(r):=2^{\frac{r-2\delta-1}{k}}-1.
\]
Let $R,r\in\R$ with $d(P_1^x(R),P_2)>e(0)$.
Let $P$ be a directed path from $P_1$ to~$P_2$ that lies completely outside of $\cB^+_{R+r}(x)\cup\cB^-_{R+r}(x)$ and let $Q$ be a geodesic from the starting point~$a$ of~$P$ to its end point~$b$.

We are going to show the following.
\begin{enumerate}[(i)]
\item\label{itm_exponentialDivergence1} If $x$ is the starting point of~$P_1$, then $d(x,Q)\leq R+2\delta$.
\item\label{itm_exponentialDivergence2} If $x$ is the end point of~$P_1$, then $d(Q,x)\leq R+\delta$.
\end{enumerate}

If $P_1$ ends at~$x$, then there is a point $v$ either on $Q$ with $d(v,P_1^x(R))\leq\delta$ or on~$P_2$ with $d(P_1^x(R),v)\leq\delta$.
By the choice of~$R$, we know that $v$ must lie on~$Q$ and this shows~(\ref{itm_exponentialDivergence2}).
So let $P_1$ start at~$x$ and let $v$ be a point either on~$Q$ with $d(P_1^x(R),v)\leq\delta$ or on~$P_2$ with $d(v,P_1^x(R))\leq\delta$.
If $v$ lies on~$Q$, then we have~(\ref{itm_exponentialDivergence1}).
So let us assume that $v$ lies on~$P_2$.
If $x$ is the end point of~$P_2$, then the directed path $P_1^x(R)P_1aQbP_2v$ shows that we find a $P_1^x(R)$-$v$ geodesic, whose length is at most $\delta f(\delta+1)+f(\delta)$ by Proposition~\ref{prop_reverseDistanceShortDelta1}\,(\ref{itm_reverseDistanceShortDelta1_2}), a contradiction to the choice of~$R$.
So let us assume that $x$ is the starting point of~$P_2$.
Let us suppose that (\ref{itm_exponentialDivergence1}) is false.
Since $d(P_2^x(R+\delta),Q)>\delta$, there is a point $u$ on~$xP_1a$ with $d(u,P_2^x(R+\delta))\leq\delta$.
This must lie on $P_1^x(R)P_1a$ as~$P_2$ is a geodesic.
Now let $v$ be on~$xP_2P_2^x(R+\delta)$ such that there is a point $v'$ on $P_1^x(R)P_1a$ with $d(v',v)\leq\delta$ but for no point $w$ on $xP_2v$ with $d(w,v)\geq\varepsilon$ there exists a point $w'$ on $P_1^x(R)P_1a$ with $d(w',w)\leq\delta$.
Note that we also have $d(v,Q)>\delta$ since (\ref{itm_exponentialDivergence1}) is false.
Let $w$ be on $xP_2v$ with $d(w,v)=\varepsilon$ if it exists and $w=x$ otherwise.
Then there is a point $w'$ on~$P_1$ with $d(w',w)\leq\delta$ since (\ref{itm_exponentialDivergence1}) is false.
This point must lie on $xP_1P_1^x(R)$.
Considering a geodesic triangle with end points $w', v, v'$ where the side between $w'$ and~$v'$ is $w'P_1 v'$ and the other two sides are directed towards~$v$, we obtain $d(w',v')\leq (2\delta+\varepsilon+1)f(\delta+1)$ by Proposition~\ref{prop_reverseDistanceShortDelta1}\,(\ref{itm_reverseDistanceShortDelta1_1}).
Since $P_1^x(R)$ lies on the side between $w'$ and~$v'$, this is a contradiction to the choice of~$R$.
This finishes the proof of (\ref{itm_exponentialDivergence1}) and~(\ref{itm_exponentialDivergence2}).

We consider $Q$ as being indexed with the empty word.
We define a set of directed paths $Q_\sigma$ with starting and end points on~$P$ and points $u_\sigma$ on~$P$ indexed by finite words over $\{0,1\}$ such that the following holds.
\begin{enumerate}[\rm (I)]
\item If $Q_\sigma$ is a directed path of length more than one, then $u_i$ is a point on~$P$ such that $u_i$ halves the length of the directed subpath of~$P$ between the starting and the end points of~$Q_\sigma$.
\item $Q_{\sigma 0}$ is a geodesic with the same starting point as $Q_\sigma$ and with end point~$u_\sigma$.
\item $Q_{\sigma 1}$ is a geodesic with $u_\sigma$ as starting point and the same end point as~$Q_\sigma$.
\end{enumerate}
It follows from the choice of the points $u_\sigma$ that the length of any $\{0,1\}$-word used as index is at most $\ln(\ell(P))$.
By applying Lemma~\ref{lem_parallelSideCloseToAndFrom} recursively, we obtain for every $n\geq 0$ a point $v_n$ on some $Q_\sigma$, where $\sigma$ has length~$n$, such that $d(x,v_n)\leq R+2\delta+nk$ if $x$ is the starting point of~$P_1$ or $d(v_n,x)\leq R+\delta+nk$ if $x$ is the end point of~$P_1$.
Note that for $n=\ln(\ell(P))$ the point $v_n$ has distance at most~$1$ to and from~$P$.
So we found a point $y$ on~$P$ with either $d(x,y)\leq R+2\delta+nk+1$ or $d(y,x)\leq R+2\delta+nk+1$.
Since $P$ lies outside of $\cB^+_{R+r}(x)\cap \cB^-_{R+r}(x)$, we have
\[
R+r\leq R+2\delta+nk+1\leq R+2\delta+k\ln(\ell(P))+1.
\]
Thus, we have
\[
e(r)<2^{\frac{r-2\delta-1}{k}}\leq\ell(P),
\]
which shows the assertion.
\end{proof}

As mentioned earlier, the reverse implication of Proposition~\ref{prop_exponentialDivergence} does not hold as the following example shows.

\begin{ex}\label{ex_GeodDivNoDelta}
Let $D$ be the digraph whose vertex set is given by two directed rays $x_0x_1\ldots$ and $y_1y_2\ldots$ with additional edges  $x_0y_1$ and $x_iy_i$ for all $i\in\nat$.
Then there is no $\delta\geq 0$ such that all geodesic triangles with end vertices $x_0,x_i,y_i$ are $\delta$-thin.
Furthermore, every function $e\colon\R\to\real$ with $e(r)>2$ for all $r\in\R$ is a divergence function for~$D$.
\end{ex}

A property for geodesic spaces is that divergence of geodesics that is faster than linear already implies that it is exponential.
For geodesic \hm\ spaces, we pose it as an open problem.

\begin{prob}
Do there exist geodesic \hm\ spaces in which geodesics diverge exponentially but not superlinearly?
\end{prob}

\section{Geodesic stability}\label{sec_GeoStab}

Another property in geodesic spaces that is equivalent to hyperbolicity is \emph{geodesic stability}, that is, for every two points, all $(\gamma,c)$-\qg s between those two points lie close to each other.
This is the main step in showing that hyperbolicity is preserved under \qis.

A geodesic \hm\ space $X$ satisfies \emph{geodesic stability} if for all $\gamma\geq 1$ and $c\geq 0$ there exists a $\kappa\geq 0$ such that, for all $x,y\in X$ and all $(\gamma,c)$-\qg s $P$ and~$Q$ from~$x$ to~$y$, every point of~$P$ lies in $\cB^+_\kappa(Q)\cap \cB^-_\kappa(Q)$.

As a first step, we prove that geodesics lie close to and from \qg s with the same starting and end points (Proposition~\ref{prop_GeodStab1}) and then that \qg s lie close to and from geodesics with the same starting and end points (Proposition~\ref{prop_GeodStab2}).

\begin{prop}\label{prop_GeodStab1}
Let $X$ be a $\delta$-hyperbolic geodesic \hm\ space that satisfies (\ref{itm_Bounded1}) and (\ref{itm_Bounded2}) for the function $f\colon\R\to\R$.
Let $\gamma\geq 1$ and $c\geq 0$.
Then there is a constant $\kappa=\kappa(\delta,\gamma,c,f)$ depending only on $\delta,\gamma,c$ and~$f$ such that for all $x,y\in X$ every $x$-$y$ geodesic lies in the $\kappa$-out-ball and in the $\kappa$-in-ball of every $(\gamma,c)$-\qg\ from~$x$ to~$y$.
\end{prop}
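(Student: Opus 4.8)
The plan is to mimic the classical proof that in a hyperbolic geodesic metric space, a geodesic stays close to any quasi-geodesic with the same endpoints, but to carry out both the "out-ball" and the "in-ball" estimate in parallel, since in the semimetric setting these are genuinely different assertions. The classical argument runs by taking a point $w$ on the geodesic $G$ that is farthest (in the relevant sense) from the quasi-geodesic $P$, say at distance $D$ from it, looking at the subpath of $G$ through $w$ of length about $2D$, and comparing it with a concatenation of a piece of $P$ with two short "bridges" of length at most $D$; the length of that concatenation is, on the one hand, at least roughly $2D$ (it contains the subpath of $G$), and on the other hand, controlled via the quasi-geodesic inequality and the thin-triangle condition; this forces $D$ to be at most a constant depending on $\delta,\gamma,c$, and — crucially in our setting — on $f$.

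\medskip

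First I would fix a $\delta$-thin geodesic triangle with one side the $x$-$y$ geodesic $G$, the second side a subsegment of the quasi-geodesic $P$, and the third side one of the short bridges, using that, by Proposition~\ref{prop_transTriSuffices}, we may work with the thinness constant $3\delta$ and ignore whether the triangle is transitive. The point $w$ on $G$ then lies in $\cB^+_{3\delta}(\text{bridge})\cup\cB^-_{3\delta}(P\text{-segment})$ or in the symmetric configuration, and by choosing $w$ extremal and the bridges minimal the bridge part contributes nothing, so $w$ is within $3\delta$ (in one of the two directions) of a point on the $P$-segment. The next step is the length bookkeeping: the subpath of $G$ of "radius" $D$ around $w$ has length (essentially) $2D$ by the geodesic property, it is contained in the $3\delta$-out-ball or $3\delta$-in-ball of the $P$-segment plus bridges, and the $P$-segment itself has length at most $\gamma\cdot(\text{distance between bridge feet})+c\le \gamma(2D+6\delta)+c$ by the $(\gamma,c)$-quasi-geodesic inequality. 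Then I invoke Proposition~\ref{prop_reverseDistanceShortDelta1}\,(\ref{itm_reverseDistanceShortDelta1_1}) — this is where $f$ enters — to bound the length of the piece of $G$ that lies in the $3\delta$-ball of $P$ by a linear-in-$\ell(P\text{-segment})$ quantity with coefficient $f(3\delta+1)$, giving an inequality of the shape $2D\lesssim f(3\delta+1)\big(\gamma(2D+6\delta)+c\big)/1 + (\text{lower order})$, which is useless as written because $D$ appears on both sides with a potentially large coefficient.

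\medskip

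The standard fix, and the step I expect to be the main obstacle, is the usual "doubling/iteration" trick: instead of comparing $G$ directly with $P$, one first shows that $G$ lies in a bounded-width neighbourhood of $P\cup(\text{bridges})$ where the width is $k:=6\delta+2\delta f(3\delta+1)$ or so — exactly the quantity appearing in Lemma~\ref{lem_parallelSideCloseToAndFrom} — and then bootstraps. Concretely: subdivide $G$ into $O(\log D)$ pieces by repeated bisection as in the proof of Proposition~\ref{prop_exponentialDivergence}, apply the close-to-and-from statement of Lemma~\ref{lem_parallelSideCloseToAndFrom} at each level of the bisection tree to the transitive triangle formed by a geodesic across a piece of $P$ and its two halves, and conclude that every point of $G$ is within $k\cdot O(\log D)$ of $P$ (in both directions). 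Comparing $k\log D$ with $D$ forces $D\le \kappa(\delta,\gamma,c,f)$ for a suitable explicit $\kappa$. The delicate points are (a) keeping track of directions so that the out-ball bound and the in-ball bound are both obtained — here one uses Lemma~\ref{lem_parallelSideCloseToAndFrom} which gives \emph{both} $R\subseteq\cB^+_k(P\cup Q)$ and $R\subseteq\cB^-_k(P\cup Q)$ — and (b) handling the quasi-geodesic, not geodesic, nature of $P$ when forming these auxiliary triangles, which is done by replacing each relevant subsegment of $P$ by an actual geodesic with the same endpoints (its length is within a factor $\gamma$ plus $c$ of the subsegment, by the quasi-geodesic inequality, and it is at bounded distance from that subsegment by applying Proposition~\ref{prop_GeodStab1} to the pair consisting of the geodesic and... no — rather, one argues directly that such a geodesic is a $(\gamma,c)$-quasi-geodesic too and the subsegment of $P$ lies uniformly close to it using only the already-established weaker estimates). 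Assembling these, one reads off the constant $\kappa$ as an explicit (if ugly) expression in $\delta,\gamma,c$ and values of $f$, which is all that is claimed.
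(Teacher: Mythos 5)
Your blueprint is the classical Morse-lemma argument (farthest point plus logarithmic bisection), and the bisection engine you describe is indeed the one the paper uses --- but it lives inside Proposition~\ref{prop_exponentialDivergence}, whose conclusion (a superlinear divergence function) the paper's proof of Proposition~\ref{prop_GeodStab1} then invokes as a black box. The genuine gap in your proposal is the construction of the directed comparison path $\sigma$. In a \hm\ space the concatenation $a\to a'\to(\text{along }Q)\to b'\to b$ needs a bridge \emph{from} the geodesic \emph{to} the quasi-geodesic at one end and a bridge \emph{from} the quasi-geodesic \emph{back to} the geodesic at the other, and these are controlled by two different quantities, $d(\cdot,Q)$ and $d(Q,\cdot)$. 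Whichever of these you take the supremum of to define $D$, the bridge in the other direction is simply not available with length $\le D$; and taking $D:=\sup_w d^\lrarrow(w,Q)$ gives, at each point, a bridge in \emph{some} direction but no consistent choice along the geodesic. You flag ``keeping track of directions'' as the delicate point, but the mechanism you offer (Lemma~\ref{lem_parallelSideCloseToAndFrom} giving both out- and in-ball containment) only addresses the recursion step of the bisection, not the existence of a directed $\sigma$ of length linear in~$D$. The paper spends the bulk of its proof --- claims (\ref{itm_GeodStab1_2}) and (\ref{itm_GeodStab1_3}) --- manufacturing exactly these two-sided bridge points ($a^+,a^-,a_Q$ and $b^+,b^-,b_Q$, via auxiliary thin triangles and Proposition~\ref{prop_reverseDistanceShortDelta1} to control reverse distances), including the case where the two bridge feet occur in the wrong order along~$Q$, so that $Q$ cannot be traversed from one to the other.

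Even after that work, the argument only yields $d^\lrarrow(z,Q)\le\kappa_1$ for every $z$ on the geodesic, i.e.\ closeness in at least one, point-dependent, direction (claim~(\ref{itm_GeodStab1_1})); a further chaining argument with the spaced points $x_i$, $y_i$ (claim~(\ref{itm_GeodStab1_4})) is needed to upgrade this to the simultaneous out-ball and in-ball containment that the proposition asserts. Your proposal has no analogue of this second stage: the farthest-point argument, even if repaired, would at best bound $\min\{d(w,Q),d(Q,w)\}$. The aside in which you almost apply Proposition~\ref{prop_GeodStab1} to itself and then substitute ``one argues directly \ldots\ using only the already-established weaker estimates'' is a symptom of the same missing ingredient rather than a fix.
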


\begin{proof}
By Proposition~\ref{prop_exponentialDivergence}, we find a superlinear divergence function $e\colon\R\to\R$ for~$X$.\footnote{Proposition~\ref{prop_exponentialDivergence} implies that $e$ is exponential, but in this proof it suffices to know that $e$ is superlinear.}
First, we will show the following.
\begin{txteq}\label{itm_GeodStab1_1}
There exists $\kappa_1$ such that for all $x,y\in X$ and for all $x$-$y$ geodesics $P$ and $(\gamma,c)$-\qg s $Q$ from~$x$ to~$y$ we have $d^\lrarrow(z,Q)\leq \kappa_1$ for all $z$ on~$P$.
\end{txteq}

Let us suppose that~(\ref{itm_GeodStab1_1}) does not hold.
Then there are $x,y\in X$, an $x$-$y$ geodesic~$P$ and a $(\gamma,c)$-\qg\ $Q$ from~$x$ to~$y$ such that there exists $z$ on~$P$ with $2d^\lrarrow(z,Q)> e(0)f(\delta+1)+f(\delta)$.
Let $\varepsilon>0$.
We choose $z$ on~$P$ such that for no $z'$ on~$P$ we have $d^\lrarrow(z',Q)\geq D:=d^\lrarrow(z,Q)+\varepsilon$.
Set
\[
K:=(D+\delta)f(\delta+1)+\delta+\varepsilon
\]
and
\[
L:=(K+D)(2+f(\delta+1))+\varepsilon.
\]

In order to show (\ref{itm_GeodStab1_1}) we will first show the following
\begin{txteq}\label{itm_GeodStab1_2}
There are points $a^+,a^-$ on $xPz$ and $a_Q$ on~$Q$ with $D-\varepsilon\leq d(a^+,z)\leq L$ and $D-\varepsilon\leq d(a^-,z)\leq L$, with $d(a^+,a_Q)\leq K$ and $d(a_Q,a^-)\leq K$ and such that all $a^+$-$a_Q$ and $a_Q$-$a^-$ geodesics lie outside of $B:=\cB^+_D(z)\cup \cB^-_D(z)$.
\end{txteq}
This obviously holds if $d(x,z)\leq L$ by choosing $a^+=a^-=a_Q=x$.
Otherwise, let $a$ on $xPz$ with $d(a,z)=K+D+(K+D)f(\delta+1)+\varepsilon$.
By the choice of~$z$, we have $d^\lrarrow(a,Q)\leq D$.

First, let us consider the case that $d(Q,a)< D$.
Let $a_Q$ be on~$Q$ with $d(Q,a)\leq d(a_Q,a)+\varepsilon$.
We consider a geodesic triangle with end points $x,a,a_Q$ such that the side $P'$ between $x$ and~$a$ is a directed subpath of~$P$.
Let $Q_1$ be the side from $x$ to~$a_Q$ and let $Q_2$ be the side from~$a_Q$ to~$a$.
Then there are $u,v$ on~$Q_1$ with $d(u,v)\leq\varepsilon$ such that $u$ lies in $\cB^+_\delta(P')$ and $v$ lies in $\cB^-_\delta(Q_2)$.
Let $w$ be on~$P'$ with $d(w,u)\leq\delta$ and let $w'$ be on~$Q_2$ with $d(v,w')\leq\delta$.
Applying Proposition~\ref{prop_reverseDistanceShortDelta1} to the geodesic triangle with end vertices $v,a_Q,w'$, we obtain $d(v,a_Q)\leq (D+\delta) f(\delta+1)$ and thus
\[
d(w,a_Q)\leq d(w,u)+d(u,v)+d(v,a_Q)\leq \delta+\varepsilon+ (D+\delta)f(\delta+1)=K.
\]
It follows that
\[
d(w,a)\leq d(w,a_Q)+d(a_Q,a)\leq K+D.
\]
We set $a^+:=w$ and $a^-:=a$.
Then we have
\[
d(a^+,z)=d(a^+,a^-)+d(a^-,z)\leq (K+D)(2+f(\delta+1))+\varepsilon=L
\]
and $d(a^+,a_Q)=d(w,a_Q)\leq K$.
If there were a point on an $a^+$-$a_Q$ or $a_Q$-$a^-$ geodesic inside~$B$, then we apply Proposition~\ref{prop_reverseDistanceShortDelta1} and obtain $d(a^+,z)\leq (K+D)f(\delta+1)$ or $d(a^-,z)\leq 2Df(\delta+1)$, which is impossible.

If $d(a,Q)\leq D$, then let $a_Q$ be on~$Q$ with $d(a,Q)\leq d(a,a_Q)+\varepsilon$.
Similarly as in the first case, we use the geodesic triangle with end points $a,a_Q,y$ to find a point $w$ on $aPy$ with $d(a_Q,w)\leq K$ and $d(a,w)\leq K+D$.
We set $a^+:=a$ and $a^-:=w$.
Again by Proposition~\ref{prop_reverseDistanceShortDelta1}, the $a^+$-$a_Q$ and $a_Q$-$a^-$ geodesics lie outside of~$B$.
Thus, we have proved~(\ref{itm_GeodStab1_2}).

A completely symmetric argument shows the following.
\begin{txteq}\label{itm_GeodStab1_3}
There are points $b^+,b^-$ on $zPy$ and $b_Q$ on~$Q$ with $D-\varepsilon\leq d(z,b^+)\leq L$ and $D-\varepsilon\leq d(z,b^-)\leq L$, with $d(b^+,b_Q)\leq K$ and $d(b_Q,b^-)\leq K$ and such that all $b^+$-$b_Q$ and $b_Q$-$b^-$ geodesics lie outside of~$B$.
\end{txteq}

Let $z_a$ be on $xPz$ and let $z_b$ be on $zPy$ with $d(z_a,z)=D=d(z,z_b)$.
Then we have $d(z_a,z_b)=2D>e(0)$ and
\[
d(z_b,z_a)\geq \frac{d(z_a,z_b)-f(\delta)}{f(\delta+1)}>e(0)
\]
by Proposition~\ref{prop_reverseDistanceShortDelta1}.
If $a_Q$ lies on $xQb_Q$, then let $R$ be the composition of an $a^+$-$a_Q$ geodesic with $a_QQb_Q$ and with a $b_Q$-$b^-$ geodesic.
Then $R$ is a directed $a^+$-$b^-$ path outside of~$B$.
The composition of an $a_Q$-$a^-$ geodesic with $a^-Pb^+$ and a $b^+$-$b_Q$ geodesic shows that $d(a_Q,b_Q)\leq 2(K+L)$.
So we have
\[
\ell(R)\leq 2K+2\gamma(K+L)+c,
\]
which is linear in~$D$.

Let us now assume that $b_Q$ lies on $xQa_Q$.
Let $R$ be the composition of a $b^+$-$b_Q$ geodesic with $b_QQa_Q$ and with an $a_Q$-$a^-$ geodesic.
Then $R$ is a $b^+$-$a^-$ path outside of~$B$.
The composition of an $a_Q$-$a^-$ geodesic with $a^-Pb^+$ and a $b^+$-$b_Q$ geodesic shows $d(a_Q,b_Q)\leq 2(K+L)$.
Since $b_Q$ lies on the $x$-$a_Q$ subpath of~$Q$, there is a $b_Q$-$a_Q$ geodesic, which has length at most $2(K+L)f(\delta+1)+f(\delta)$ by Proposition~\ref{prop_reverseDistanceShortDelta1}.
Thus, we have
\[
\ell(R)\leq 2K+2\gamma(K+L)f(\delta+1)+\gamma f(\delta)+c,
\]
which is also linear in~$D$.

So in each case we obtain a contradiction to~$e$ being a superlinear divergence function.
We thus proved~(\ref{itm_GeodStab1_1}).

Let
\[
\kappa_2>2(\kappa_1+7\delta+(2\delta+\kappa_1)f(\delta+1))
\]
and set
\begin{align*}
\lambda_1:=&((\kappa_2+\kappa_1)f(\delta+1)+\kappa_1)f(\delta+1),\\
\lambda_2:=&\gamma\lambda_1+c,\\
\kappa_3:=&\lambda_2+7\delta+2\delta f(\delta+1)+\kappa_2/2.
\end{align*}
We will prove that the assertion holds for~$\kappa=\kappa_2+\kappa_3$.

Let $x,y\in X$, let $P$ be an $x$-$y$ geodesic and let~$Q$ be a $(\gamma,c)$-\qg\ from~$x$ to~$y$.
For all $0\leq i$ with $i\kappa_2\leq d(x,y)$ let $x_i$ be the point on~$P$ with $d(x,x_i)=i\kappa_2$ and $y_i$ be a point on~$Q$ with $d^\lrarrow(x_i,y_i)\leq\kappa_1$.
We assume $x_0=y_0$.
We will show the following.
\begin{txteq}\label{itm_GeodStab1_4}
For all $i\in\N$ such that $i\kappa_2\leq d(x,y)$, we have $d(x_i,Q)\leq\kappa_3$ and $d(Q,x_i)\leq\kappa_3$.
\end{txteq}
Note that the assertion follows for the constant~$\kappa$ from~(\ref{itm_GeodStab1_4}) directly.

Let $N\in\N$ be maximum such that $N\kappa_2\leq d(x,y)$ and set $x_{N+1}:=y_{N+1}:=y$.
For all $i\in\{0,\ldots,N-1\}$ such that $y_i$ lies on $xQy_{i+1}$, there is either a directed $y_i$-$x_{i+1}$ path or a directed $x_i$-$y_{i+1}$ path depending on the direction of the shortest path between $x_i$ and~$y_i$.
Applying Proposition~\ref{prop_reverseDistanceShortDelta1} twice to geodesic triangles with end points either $x_i,y_i,x_{i+1}$ and $y_i,x_{i+1},y_{i+1}$ or $x_i,x_{i+1},y_{i+1}$ and $x_i,y_i,y_{i+1}$ we obtain $d(y_i,y_{i+1})\leq \lambda_1$.

Let $n\in\{1,\ldots, N\}$ such that $d(x_n,y_n)\leq\kappa_2$ and let $i\in\{0,\ldots,n-1\}$ be largest such that $y_i$ lies on $xQy_n$.
Then $y_i$ lies on $xQy_{i+1}$ and $y_n$ lies on $y_iQy_{i+1}$.
Since $Q$ is a $(\gamma,c)$-\qg, the length of the path $y_iQy_{i+1}$ is at most $\lambda_2$ and thus we have $d(y_i,y_n)\leq\lambda_2$.

We will show $d(y_i,x_n)\leq\kappa_3$.
This holds trivially, if $d(y_i,x_i)\leq\kappa_1$.
So let us assume that $d(x_i,y_i)\leq\kappa_1$.
Note that there is a directed $x_i$-$y_n$ path, so we find an $x_i$-$y_n$ geodesic~$R_1$.
Let $P'$ the subpath of~$P$ between $x_i$ and~$x_n$ and let $Q'$ be a $y_i$-$y_n$ geodesic.
Let $R_2$ be a $x_n$-$y_n$ geodesic and let $R_3$ be a geodesic between $x_i$ and~$y_i$.
We consider two geodesic triangles: one with $x_i,x_n,y_n$ as its end points and $P',R_1,R_2$ as its sides and the other with $x_i,y_i,y_n$ as its end points and $Q',R_1,R_3$ as its sides.
Let $u$ be on~$P'$ with $d(u,x_n)=\kappa_2/2$.
By hyperbolicity, there is either a point $v$ on~$R_1$ with $d(v,u)\leq\delta$ or a point~$v'$ on~$R_2$ with $d(u,v')\leq\delta$.
The latter case leads to a contradiction since then Proposition~\ref{prop_reverseDistanceShortDelta1} implies $d(u,x_n)\leq (\delta+\kappa_1)f(\delta+1)<\kappa_2/2$.
So we find a point $v$ on~$R_1$ with $d(v,u)\leq\delta$.

Applying Lemma~\ref{lem_parallelSideCloseToAndFrom}, we find a point $w$ on either $Q'$ or~$R_3$ with $d(w,v)\leq 6\delta+2\delta f(\delta+1)$.
If $w$ is on~$R_3$, we find a path from~$x_i$ via $w$ and~$v$ to~$u$ of length at most $\kappa_1+7\delta+2\delta f(\delta+1)$, which contradicts $d(x_i,u)=\kappa_2/2$.
Thus, $w$ lies on~$Q'$ and hence, the composition of $y_iQ'w$ with a $w$-$v$ geodesic, a $v$-$u$ geodesic and $uPx_n$ shows
\[
d(y_i,x_n)\leq \lambda_2+7\delta+2\delta f(\delta+1)+\kappa_2/2=\kappa_3.
\]

Now let us consider the case that for $n\in\{1,\ldots,N\}$ we have $d(y_n,x_n)\leq\kappa_1$.
We follow a completely symmetric argument as in the previous case with reversed directions.
Most importantly, in this case $x_i$ is chosen so that $i\in\{n+1,\ldots,N+1\}$ is smallest such that $y_i$ lies on $y_nQy$.

This finishes the proof of~(\ref{itm_GeodStab1_4}) and thus the proof of our assertion as mentioned earlier.
\end{proof}

\begin{prop}\label{prop_GeodStab2}
Let $X$ be a $\delta$-hyperbolic geodesic \hm\ space satisfying (\ref{itm_Bounded1}) and (\ref{itm_Bounded2}) for the function $f\colon\R\to\R$.
Let $\gamma\geq 1$ and $c\geq 0$.
Then there is a constant $\lambda=\lambda(\delta,\gamma,c,f)$ depending only on $\delta,\gamma,c$ and~$f$ such that for all $x,y\in X$ every $(\gamma,c)$-\qg\ from~$x$ to~$y$ lies in the $\lambda$-out-ball and in the $\lambda$-in-ball of every $x$-$y$ geodesic.
\end{prop}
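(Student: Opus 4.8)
The plan is to view this as the asymmetric Morse lemma and bootstrap entirely from Proposition~\ref{prop_GeodStab1}. Fix an $x$-$y$ geodesic~$P$ and a $(\gamma,c)$-\qg~$Q$ from~$x$ to~$y$, and let $\kappa=\kappa(\delta,\gamma,c,f)$ be the constant supplied by Proposition~\ref{prop_GeodStab1}, so that $P$ lies in $\cB^+_\kappa(Q)\cap\cB^-_\kappa(Q)$. Fix a point~$z$ on~$Q$; it suffices to produce a point~$m$ on~$P$ together with a bound, depending only on $\delta,\gamma,c$ and~$f$, for both $d(m,z)$ and $d(z,m)$, since then $z$ lies in the corresponding out-ball and in-ball of~$P$ and we may take~$\lambda$ to be that bound. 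Write $Q=Q_1Q_2$ with $Q_1=xQz$ and $Q_2=zQy$.

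The core mechanism is the following. Suppose we can find a point~$m$ on~$P$ that lies both in $\cB^-_\kappa(Q_1)$ and in $\cB^+_\kappa(Q_2)$; say $d(m,a)\le\kappa$ with~$a$ on~$Q_1$ and $d(b,m)\le\kappa$ with~$b$ on~$Q_2$. Since~$a$ is not past~$z$ and~$b$ is not before~$z$ on~$Q$, the point~$z$ lies on the directed subpath $aQb$ of~$Q$, and the path $b\to m\to a$ gives $d(b,a)\le 2\kappa$. As $d(a,b)\le\ell(aQb)<\infty$, Proposition~\ref{prop_reverseDistanceShortDelta1}\,(\ref{itm_reverseDistanceShortDelta1_2}) (with $\varepsilon=1$ and $g=f$) yields $d(a,b)\le 2\kappa f(\delta+1)+f(\delta)$, hence $\ell(aQb)\le\gamma(2\kappa f(\delta+1)+f(\delta))+c=:C$, and therefore $d(a,z)\le\ell(aQz)\le C$ and $d(z,b)\le\ell(zQb)\le C$. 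Chaining now gives $d(m,z)\le d(m,a)+d(a,z)\le\kappa+C$ and $d(z,m)\le d(z,b)+d(b,m)\le C+\kappa$, so $m$ works with $\lambda:=\kappa+C$, which depends only on $\delta,\gamma,c,f$.

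It remains to secure the existence of such an~$m$. I would run a connectedness argument on the parameter interval~$I$ of~$P$. The sets $\{t\in I: P(t)\in\cB^-_\kappa(Q_1)\}$ and $\{t\in I: P(t)\in\cB^+_\kappa(Q_2)\}$ are closed in~$I$: the maps $w\mapsto d(w,Q_1)$ and $w\mapsto d(Q_2,w)$ are, respectively, lower semicontinuous for~$\cO_f$ and for~$\cO_b$, so $\cB^-_\kappa(Q_1)$ is $\cO_f$-closed and $\cB^+_\kappa(Q_2)$ is $\cO_b$-closed, while~$P$, being continuous for both the forward and the backward topologies, is continuous from~$I$ with its standard topology into~$X$ with the topology $\cO_f\vee\cO_b$. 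If these two sets cover~$I$, then, as~$I$ is connected, they intersect and we obtain~$m$. Since $P\sub\cB^-_\kappa(Q_1)\cup\cB^-_\kappa(Q_2)$ and $P\sub\cB^+_\kappa(Q_1)\cup\cB^+_\kappa(Q_2)$, any point of~$P$ lying in neither set must lie in $\cB^+_\kappa(Q_1)\cap\cB^-_\kappa(Q_2)$, i.e.\ it is reached from~$Q_1$ and reaches into~$Q_2$ but carries no witness crossing~$z$ in the direction we need.

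The step I expect to be the main obstacle is precisely this last case. The naive bridge argument applied to such a point~$m'$ produces $a'$ on~$Q_1$ and~$b'$ on~$Q_2$ with $a'\to m'\to b'$ and~$z$ on $a'Qb'$, but now~$m'$ and~$z$ are both downstream of~$a'$ and upstream of~$b'$ and hence incomparable, so the squeeze in the second paragraph fails. In the symmetric (metric) setting this configuration simply does not arise, because "close to" has no direction; here it must be dealt with separately. I would attempt to resolve it either by re-choosing the splitting point of~$Q$ (moving it past~$z$ to~$b'$, which lies near~$P$, and arguing that this either produces a genuine bridge or can only be repeated boundedly often so that the resulting estimate stays uniform), or by importing the exponential-divergence argument from the proof of Proposition~\ref{prop_exponentialDivergence} and Lemma~\ref{lem_parallelSideCloseToAndFrom} to rule out that a point of~$P$ of this type can be far, in either direction, from~$z$. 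Either way, the directional bookkeeping in this case is where the real work lies; the remainder of the proof is the bounded chasing of distances already sketched above.
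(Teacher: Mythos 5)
Your setup and your ``good case'' coincide with the paper's: take $\kappa$ from Proposition~\ref{prop_GeodStab1}, split $Q$ at $z$ into $Q_1=xQz$ and $Q_2=zQy$, and observe that a point $m$ of $P$ with $d(m,a)\le\kappa$ for some $a$ on $Q_1$ and $d(b,m)\le\kappa$ for some $b$ on $Q_2$ yields, via Proposition~\ref{prop_reverseDistanceShortDelta1}\,(\ref{itm_reverseDistanceShortDelta1_2}) and the quasi-geodesic inequality, a closed directed walk through $m$ and $z$ of bounded length. That part is correct. But the case you yourself flag as ``the main obstacle'' and leave unresolved is exactly where the substance of the proof lies, so as written there is a genuine gap. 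The paper does not need your connectedness machinery: it takes $u$ to be, up to $\varepsilon$, the last point of $P$ lying in $\cB^-_\kappa(Q_1)$. If some $w'$ on $Q_2$ has $d(w',u)\le\kappa$, you are in your good case. Otherwise Proposition~\ref{prop_GeodStab1} forces a $w'$ on $Q_1$ with $d(w',u)\le\kappa$, while the point $v$ on $uPy$ at distance $\varepsilon$ past $u$ must be witnessed by some $v'$ on $Q_2$ with $d(v,v')\le\kappa$ --- precisely your ``wrong-way'' configuration $\cB^+_\kappa(Q_1)\cap\cB^-_\kappa(Q_2)$, localized at the transition point.

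The resolution is neither a re-splitting of $Q$ nor an appeal to divergence, but two further thin-triangle computations that your sketch is missing. The bridge $w'\to u\to v\to v'$ gives $d(w',v')\le 2\kappa+\varepsilon$, so the subpath $w'Qv'$, which contains $z$, has length at most $\gamma(2\kappa+\varepsilon)+c$; this bounds $d(w',z)$ and $d(z,v')$. What it does not give is a way back to $P$ in the correct directions: one still needs $d(v',P)$ and $d(P,w')$ to be bounded, and the witnesses at hand point the wrong way ($v\to v'$ and $w'\to u$). The paper obtains $d(v',P)$ from the geodesic triangle with end points $v,v',y$ (sides $vPy$, a $v$-$v'$ geodesic of length at most $\kappa$, and a $v'$-$y$ geodesic): thinness produces on the $v'$-$y$ side a transition point whose distance from $v'$ is controlled by Proposition~\ref{prop_reverseDistanceShortDelta1}\,(\ref{itm_reverseDistanceShortDelta1_1}) in terms of $\kappa$, $\delta$ and $f$, and which admits a short directed path into $vPy$. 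The symmetric triangle with end points $x,w',u$ bounds $d(P,w')$. Until you carry out these two arguments, the bad case is genuinely open: a priori a point of $P$ lying only in $\cB^+_\kappa(Q_1)\cap\cB^-_\kappa(Q_2)$ gives no control whatsoever on $d^\lrarrow(z,P)$.
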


\begin{proof}
Let $\kappa$ be the constant from Proposition~\ref{prop_GeodStab1}.
Let $x,y\in X$ and let $P$ be an $x$-$y$ geodesic and $Q$ be a $(\gamma,c)$-\qg\ from~$x$ to~$y$.
Let $z$ be on~$Q$ such that either $d(z,P)>\kappa$ or $d(P,z)>\kappa$.
Set $Q_1:=xQz$ and $Q_2:=zQy$.
Let $\varepsilon>0$ and let $u$ be on~$P$ such that there exists $w$ on~$Q_1$ with $d(u,w)\leq\kappa$ and such that for no point $u'$ on~$uPy$ with $d(u,u')\geq\varepsilon$ there is $w'$ on~$Q_1$ with $d(u',w')\leq\kappa$.

If there is $w'$ on~$Q_2$ with $d(w',u)\leq\kappa$, then $d(w',w)\leq 2\kappa$ and since there is a directed $w$-$w'$ path, we have $d(w,w')\leq 2\kappa f(\delta+1) + f(\delta)$ by Proposition~\ref{prop_reverseDistanceShortDelta1}.
So the distance from~$w$ to~$w'$ on~$Q$ is at most $2\gamma\kappa f(\delta+1)+\gamma f(\delta)+c$ and hence $z$ and~$u$ lie on a closed directed walk of length at most $2\kappa+2\gamma\kappa f(\delta+1)+\gamma f(\delta)+c$.

Let us now assume that there is no $w'$ on~$Q_2$ with $d(w',u)\leq\kappa$.
In particular, we have $u\neq y$.
By Proposition~\ref{prop_GeodStab1}, there is a point $w'$ on~$Q_1$ with $d(w',u)\leq\kappa$.
Let $v$ be on~$uPy$ with $d(u,v)=\varepsilon$.
By the choice of~$u$, there is a point $v'$ on~$Q_2$ with $d(v,v')\leq\kappa$.
So we have $d(w',v')\leq 2\kappa+\varepsilon$ and hence the length of the directed $w'$-$v'$ subpath of~$Q$ is at most $\gamma (2\kappa+\varepsilon)+c$.
Note that this directed subpath contains~$z$.
We consider the geodesic triangle with end points $v,v',y$ such that the side $P'$ between $v$ and~$y$ is $vPy$, the side $R_1$ is a $v'$-$y$ geodesic and the side $R_2$ is a $v$-$v'$ geodesic.
Then there $a,b$ on~$R_1$ with $a$ on~$v'R_1b$ and $d(a,b)\leq\varepsilon$ such that $d(R_2,a)\leq\delta$ and $d(b,P')\leq\delta$.
By Proposition~\ref{prop_reverseDistanceShortDelta1}, we conclude $d(v',a)\leq (\kappa+\delta)f(\delta+1)$.
So we have
\[
d(v',P)\leq d(v',a)+d(a,b)+d(b,P)\leq (\kappa+\delta)f(\delta+1)+\varepsilon+\delta
\]
and hence
\[
d(z,P)\leq d(z,v')+d(v',P)\leq (\kappa+\delta)f(\delta+1)+\varepsilon+\delta+\gamma (2\kappa+\varepsilon)+c.
\]
With a symmetric argument by considering a geodesic triangle with end vertices $x,w',u$, we conclude
\[
d(P,z)\leq (\kappa+\delta)f(\delta+1)+\varepsilon+\delta+ \gamma (2\kappa+\varepsilon)+c.
\]
So the assertion follows for
\[
\lambda:=\max\{2\kappa+2\gamma\kappa f(\delta+1)+\gamma f(\delta)+c,(\kappa+\delta)f(\delta+1)+1+\delta+\gamma (2\kappa+1)+c\}.\qedhere
\]
\end{proof}

We obtain the following corollary directly from Propositions~\ref{prop_GeodStab1} and~\ref{prop_GeodStab2} the following.

\begin{cor}\label{cor_Delta1GeodStab}
Every hyperbolic geodesic \hm\ space that satisfies (\ref{itm_Bounded1}) and (\ref{itm_Bounded2}) satisfies geodesic stability.\qed
\end{cor}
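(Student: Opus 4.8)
The plan is to obtain \emph{geodesic stability} for $X$ by inserting a genuine geodesic between the two \qg s and using Propositions~\ref{prop_GeodStab1} and~\ref{prop_GeodStab2} as the two halves of the resulting bridge. Fix $\gamma\geq 1$ and $c\geq 0$, choose $\delta\geq 0$ with~$X$ being $\delta$-hyperbolic, and choose a single function $f\colon\R\to\R$ witnessing both~(\ref{itm_Bounded1}) and~(\ref{itm_Bounded2}) (take the pointwise maximum of the two functions coming from these properties). Let $\kappa=\kappa(\delta,\gamma,c,f)$ be the constant provided by Proposition~\ref{prop_GeodStab1} and $\lambda=\lambda(\delta,\gamma,c,f)$ the constant provided by Proposition~\ref{prop_GeodStab2}. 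I claim that $\kappa+\lambda$ works for~$\gamma$ and~$c$ in the definition of geodesic stability.

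The verification I would carry out runs as follows. Let $x,y\in X$ and let $P,Q$ be $(\gamma,c)$-\qg s from~$x$ to~$y$; since such a \qg\ has finite length we have $d(x,y)<\infty$, so, $X$ being geodesic, there is an $x$-$y$ geodesic~$G$. Let $z$ be an arbitrary point on~$P$. By Proposition~\ref{prop_GeodStab2} applied to~$P$ and~$G$, the point~$z$ lies in $\cB^+_\lambda(G)$ and in $\cB^-_\lambda(G)$, so there are $g^+,g^-$ on~$G$ with $d(g^+,z)\leq\lambda$ and $d(z,g^-)\leq\lambda$. By Proposition~\ref{prop_GeodStab1} applied to~$G$ and~$Q$, all of~$G$ lies in $\cB^+_\kappa(Q)$ and in $\cB^-_\kappa(Q)$, so there are $q^+,q^-$ on~$Q$ with $d(q^+,g^+)\leq\kappa$ and $d(g^-,q^-)\leq\kappa$. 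Then $d(q^+,z)\leq d(q^+,g^+)+d(g^+,z)\leq\kappa+\lambda$ and $d(z,q^-)\leq d(z,g^-)+d(g^-,q^-)\leq\lambda+\kappa$, whence $z\in\cB^+_{\kappa+\lambda}(Q)\cap\cB^-_{\kappa+\lambda}(Q)$. As~$z$ ranged over all of~$P$, this is exactly geodesic stability with the constant~$\kappa+\lambda$.

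There is no serious obstacle here — the corollary is a direct amalgamation of the two preceding propositions — but two points deserve attention. The genuine one is keeping the \emph{directions} of the out- and in-balls aligned while chaining: Proposition~\ref{prop_GeodStab2} describes how~$P$ sits around~$G$ and Proposition~\ref{prop_GeodStab1} how~$G$ sits around~$Q$, and one must combine the out-ball clause of the former with the out-ball clause of the latter (and likewise the in-ball clauses) so that the two one-sided estimates actually compose under the triangle inequality in the order $q^+\to g^+\to z$, exactly as above; using them in mismatched directions yields nothing. The second, purely cosmetic, point is the existence of~$G$: one only needs that a $(\gamma,c)$-\qg\ of finite length from~$x$ to~$y$ forces $d(x,y)<\infty$, after which geodesicity of~$X$ supplies~$G$, and the degenerate case of a pair admitting no $x$-$y$ geodesic does not arise in the settings of interest.
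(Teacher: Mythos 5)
Your argument is exactly the paper's: the corollary is stated as an immediate consequence of Propositions~\ref{prop_GeodStab1} and~\ref{prop_GeodStab2}, obtained by chaining the out-ball estimates $q^+\to g^+\to z$ and the in-ball estimates $z\to g^-\to q^-$ through an intermediate $x$-$y$ geodesic, just as you do. The directions compose correctly and the constant $\kappa+\lambda$ works, so the proposal is correct and matches the intended proof.
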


To compare geodesic stability with divergence of geodesics, we note that exponential divergence of geodesics does not imply geodesic stability as Example~\ref{ex_GeodDivNoDelta} shows.
The reverse is also false as our next example shows.
That example is a digraph that satisfies geodesic stability but whose geodesics do not diverge properly and that is not hyperbolic.

\begin{ex}
Let $x_0,x_1\ldots$ and $y_0,y_1,\ldots$ be two disjoint sequences of distinct vertices.
For every $i$, we add an edge $x_iy_i$, a path from $x_{i+1}$ to~$x_i$ of length $i+1$ and a path from $y_i$ to~$y_{i+1}$ of length $i+1$ such that all these new paths are pairwise disjoint and meet the two sequences only in its end vertices.
For $n\in\N$, we take two geodesics: one that ends at~$x_0$ and passed through all $x_i$ for $i\leq n$ and one that starts at~$x_0$ and passes through all $y_i$ for $i\leq n$.
There are vertices on the first geodesic that have distance at least $n/2$ to the other geodesic, but the edge $x_n y_n$ shows that these two geodesics do not diverge properly.
The geodesic triangles with end vertices $x_n$, $x_{n+1}$ and $y_{n+1}$ are not $(n/2-1)$-thin.
So $D$ is not hyperbolic.

Let $R_1$ be the geodesic anti-ray that passes through all $x_i$ and ends at~$x_0$ and let $R_2$ be the geodesic ray that starts at~$y_0$ and passes through all~$y_i$.
Then every directed path that is not a subpath of $R_1$ or~$R_2$ can be obtained by composing a subpath of~$R_1$ with a path consisting of an edge $x_iy_i$ and with a subpath of~$R_2$.
Hence, if a geodesic and a $(\gamma,c)$-\qg\ have the same first and the same last vertex, then they differ in that an edge $x_iy_i$ of the geodesic is replaced by $P=x_iR_1x_jy_jR_2y_i$ for some $j<i$.
Note that we have $\ell(P)\leq \gamma+c$ as $d(x_i,y_i)=1$.
If follows that the distance of points on~$P$ from and to the geodesic is at most $\ell(P)$ and hence bounded by $\gamma+c$.
Trivially, all points on the geodesic lie within $1$ from and to the \qg.
It follows that the digraph satisfies geodesic stability.
\end{ex}

\section{Quasi-isometries}\label{sec_QI}

In this section, we look at \qis\ of geodesic \hm\ spaces and ask which of our properties are preserved by \qis.
This is motivated by the fact that hyperbolicity of geodesic metric spaces is preserved by \qis, see e.\,g.\ \cite[Theorem III.H.1.9]{BridsonHaefliger}.
We will prove that hyperbolicity (with the additional assumptions of (\ref{itm_Bounded1}) and (\ref{itm_Bounded2})) and geodesic stability are preserved under \qis\ and pose it as open problem what happens for (superlinear/exponential) divergence of geodesics.

\begin{prop}
Geodesic stability is preserved by \qis.
\end{prop}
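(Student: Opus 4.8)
The plan is to show that if $\varphi\colon X\to Y$ is a $(\gamma,c)$-\qiy\ and $Y$ satisfies geodesic stability, then so does $X$ (and symmetrically, using that \qiy\ is an equivalence relation by \cite[Proposition 1]{GK-SemimetricSpaces}, it then holds for $Y$ if it holds for $X$; so it suffices to transport the property along one \qiy). Fix $\gamma'\geq 1$ and $c'\geq 0$ and let $P,Q$ be two $(\gamma',c')$-\qg s in $X$ from $x$ to $y$. The first step is the standard observation that the $\varphi$-image of a $(\gamma',c')$-\qg\ is, after bounding the length contributions via the upper \qi\ inequality, a $(\gamma'',c'')$-\qg\ in $Y$ for constants $\gamma''=\gamma''(\gamma,c,\gamma')$ and $c''=c''(\gamma,c,\gamma',c')$ — here one has to be a little careful, since in the \hm\ setting lengths are defined as limits of Riemann-type sums over subdivisions, so I would check that applying the upper bound $d_Y(\varphi(u),\varphi(v))\leq\gamma d_X(u,v)+c$ termwise to the defining sum of $\ell$ and passing to the limit does the job (the additive errors are controlled because we only need the \qg\ inequality between pairs of points on the path, not a global length bound). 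Strictly speaking $\varphi\circ P$ need not be a \emph{directed path} in $Y$ since $\varphi$ need not be continuous; so the cleaner route is to replace $\varphi\circ P$ by an actual directed path $\widehat P$ in $Y$ obtained by concatenating geodesics between consecutive images $\varphi(P(t_i))$ along a fine subdivision — this is the usual "\qi s send \qg s close to \qg s" device, and the concatenated path is a \qg\ in $Y$ with controlled constants and stays within bounded out- and in-distance of $\varphi\circ P$.

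The second step is to apply geodesic stability in $Y$: the images $\widehat P,\widehat Q$ are $(\gamma'',c'')$-\qg s from $\varphi(x)'$ to $\varphi(y)'$ (after adjusting endpoints by at most $c$ using the \qiy\ surjectivity-up-to-$c$ condition and absorbing this into the constants), so there is $\kappa''=\kappa''(Y,\gamma'',c'')$ with every point of $\widehat P$ in $\cB^+_{\kappa''}(\widehat Q)\cap\cB^-_{\kappa''}(\widehat Q)$. The third step is to pull this back: for $z$ on $P$, the point $\varphi(z)$ is within bounded out/in-distance of some point of $\widehat P$, hence within $\kappa''+O(1)$ out/in-distance of some point $\varphi(z')'$ with $z'$ on $Q$; then the \emph{lower} \qi\ inequality $\gamma^{-1}d_X(z,z')-c\leq d_Y(\varphi(z),\varphi(z'))$ (and the same with arguments swapped) converts a bound on $d_Y^\lrarrow$ into a bound $d_X^\lrarrow(z,Q)\leq\gamma(\kappa''+O(1))+\gamma c=:\kappa$, depending only on $\gamma,c,\gamma',c'$ and the geodesic-stability data of $Y$. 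This is exactly what is required, so $X$ satisfies geodesic stability.

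I expect the main obstacle to be bookkeeping rather than a conceptual difficulty: one has to check that the "\qiy\ sends \qg s to \qg s" lemma genuinely holds in the \hm\ setting with the paper's definition of directed paths and of length as a limit of subdivision sums — in particular that the in-direction distances $d(P(b),P(a))$, which are explicitly left uncontrolled for \qg s, never enter the estimate, and that replacing $\varphi\circ P$ by a concatenation of $Y$-geodesics along a subdivision does not blow up the additive constant (it does not, because each geodesic segment has length $\gamma d_X(P(t_{i-1}),P(t_i))+c\leq$ (bounded, for a fine enough subdivision), and the \qg\ inequality is stable under such perturbations up to enlarging $c''$). Once that lemma is in place, the forward and backward transport are symmetric and the constants visibly depend only on the allowed data, completing the proof that geodesic stability is a \qiy\ invariant.
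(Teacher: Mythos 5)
Your proposal is correct and follows essentially the same route as the paper: push both quasi-geodesics forward along the quasi-isometry (the paper simply cites Gray and Kambites \cite[Proposition 1]{GK-SemimetricSpaces} for the fact that the images are quasi-geodesics with controlled constants, where you instead tame the image by concatenating geodesics along a subdivision), apply geodesic stability in the target, and pull the resulting two-sided distance bound back via the lower quasi-isometry inequality. The extra care you take about the image failing to be a directed path is a legitimate point that the paper delegates to the cited result, but it does not change the argument.
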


\begin{proof}
Let $X$ and $Y$ be geodesic \hm\ spaces such that $Y$ satisfies geodesic stability and let $f\colon X\to Y$ be a $(\lambda,c)$-\qiy\ for some $\gamma\geq 1$ and $c\geq 0$.
Let $x_1,x_2\in X$, let $P$ be an $x_1$-$x_2$ geodesic and let $Q$ be a $(\lambda,\ell)$-\qg\ from $x_1$ to~$x_2$ for some $\lambda\geq 1$ and $\ell\geq 0$.
Then there are constants $\gamma'\geq 1$ and $c'\geq 0$ such that the images of~$P$ and~$Q$ under~$f$ are $(\gamma',c')$-\qg s, see Gray and Kambites \cite[Proposition 1]{GK-SemimetricSpaces}.
By geodesic stability, there is a constant $\kappa\geq 0$ such that
\[
f(P)\sub \cB^+_\kappa(f(Q))\cap\cB^-_\kappa(f(Q))
\]
and
\[
f(Q)\sub \cB^+_\kappa(f(P))\cap\cB^-_\kappa(f(P)).
\]
Since $f$ is a \qiy, there is a constant $\kappa'\geq 0$ such that
\[
P\sub \cB^+_{\kappa'}(Q)\cap\cB^-_{\kappa'}(Q)
\]
and
\[
Q\sub \cB^+_{\kappa'}(P)\cap\cB^-_{\kappa'}(P).\qedhere
\]
\end{proof}

The proof that hyperbolicity is preserved under \qis\ relies also on their geodesic stability.

\begin{prop}\label{prop_QIPreserveHyp}
Let $X$ and $Y$ be two geodesic \hm\ spaces such that $X$ is hyperbolic and satisfies (\ref{itm_Bounded1}) and~(\ref{itm_Bounded2}).
If $X$ is \qi\ to~$Y$, then $Y$ is hyperbolic.
\end{prop}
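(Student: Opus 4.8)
The plan is to run the metric-space strategy ``hyperbolic $\Rightarrow$ geodesic stability $\Rightarrow$ hyperbolic'' in a one-sided way: only the implication that a hyperbolic $X$ satisfying~(\ref{itm_Bounded1}) and~(\ref{itm_Bounded2}) has geodesic stability (Corollary~\ref{cor_Delta1GeodStab}) will be used, never the converse (which fails in general). So first I would fix a $(\gamma,c)$-\qiy\ $f\colon X\to Y$ and, using that being \qi\ is an equivalence relation \cite[Proposition~1]{GK-SemimetricSpaces}, pass to a quasi-inverse $g\colon Y\to X$, i.e.\ a \qiy\ for which $f\circ g$ and $g\circ f$ displace every point of $Y$, resp.\ of $X$, by a bounded amount in both directions.

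Given a geodesic triangle in $Y$ with end points $p,p',p''$ and sides $P,Q,R$ in the incidence pattern required by the definition of a $\delta$-thin triangle, I would pull it back to $X$ by the usual discretisation: sample each side at its end points and at further points of consecutive forward-distance one along it (possible since a $Y$-geodesic has finite length, so its end points have finite $Y$-distance), map the samples by $g$ --- consecutive images are then at $X$-distance at most $\gamma+c$ --- and join consecutive images by $X$-geodesics, which exist because $X$ is geodesic and all relevant distances are finite. A routine estimate, using the left-hand inequality of the \qi\ embedding to bound $X$-distances between sample images from below, shows that the resulting directed paths $\tilde P,\tilde Q,\tilde R$ are $(\gamma',c')$-\qg s with $\gamma',c'$ depending only on $\gamma,c$, sharing their end points $g(p),g(p'),g(p'')$ in the same incidence pattern.

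Now choose $X$-geodesics $P^*,Q^*,R^*$ between the pairs among $g(p),g(p'),g(p'')$ with the matching orientations; they form a geodesic triangle in $X$, so $P^*\subseteq\cB^+_\delta(Q^*)\cup\cB^-_\delta(R^*)$, where $\delta$ is a hyperbolicity constant of $X$. Applying the geodesic stability of $X$ (Corollary~\ref{cor_Delta1GeodStab}) to the pairs $(\tilde P,P^*)$, $(\tilde Q,Q^*)$, $(\tilde R,R^*)$ puts each $\tilde P$ and $P^*$ (and likewise for $Q$ and $R$) within a uniform $\kappa$ of each other in both the out-ball and the in-ball sense. From this I would deduce $\tilde P\subseteq\cB^+_{\delta'}(\tilde Q)\cup\cB^-_{\delta'}(\tilde R)$ for a constant $\delta'$ depending only on $\delta$, $\kappa$ and the functions from~(\ref{itm_Bounded1}),~(\ref{itm_Bounded2}); the places where a ``forward'' estimate has to be converted into a ``backward'' one are handled by Proposition~\ref{prop_reverseDistanceShortDelta1}\,(\ref{itm_reverseDistanceShortDelta1_2}) applied inside $X$ to two points of the geodesic $P^*$ that are both close to one common point of $\tilde P$. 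Finally, pushing everything forward by $f$ multiplies these estimates by at most $\gamma$ up to an additive $c$, and since $f\circ g$ moves points of $Y$ a bounded amount in both directions, $f(\tilde P),f(\tilde Q),f(\tilde R)$ lie boundedly close to $P,Q,R$ in both the out-ball and in-ball sense; chaining these containments yields that the original geodesic triangle in $Y$ is $\delta''$-thin for some $\delta''$ independent of the triangle, and hence $Y$ is hyperbolic.

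The main obstacle is exactly the transfer of thinness in the last paragraph: in a metric space it is immediate from Hausdorff-closeness, but here one must track, for each point of $\tilde P$, in which direction it lies close to $P^*$, in which direction $P^*$ is $\delta$-thin at the relevant point, and in which direction $Q^*$ or $R^*$ lies close to $\tilde Q$ or $\tilde R$; these directions compose only after invoking~(\ref{itm_Bounded1})/(\ref{itm_Bounded2}) via Proposition~\ref{prop_reverseDistanceShortDelta1} at the points where the naive composition would fail. One may first reduce to transitive geodesic triangles via Proposition~\ref{prop_transTriSuffices} to cut down the number of incidence cases, but this does not remove the directional bookkeeping.
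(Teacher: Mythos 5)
Your proposal is correct and follows essentially the same route as the paper: pull the $Y$-triangle back to $X$ as quasi-geodesics, compare these with genuine $X$-geodesics via geodesic stability (Corollary~\ref{cor_Delta1GeodStab}), invoke $\delta$-thinness of the resulting geodesic triangle in $X$, and transfer the estimates back through the quasi-isometry inequality (the paper simply takes the direct image of the sides under a quasi-isometry $Y\to X$ rather than discretising along a quasi-inverse, but this is the same device). Your explicit attention to the directional bookkeeping, resolved via Proposition~\ref{prop_reverseDistanceShortDelta1}, is in fact more careful than the paper's write-up, which compresses that step into a single sentence.
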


\begin{proof}
Let $X$ be $\delta$-hyperbolic for some $\delta\geq 0$.
Let $f\colon Y\to X$ be a $(\gamma,c)$-\qiy\ for some $\gamma\geq 1$ and $c\geq 0$.
We consider a geodesic triangle in~$Y$ with end points $x,y,z$ and sides $P_1,P_2,P_3$, where $P_1$ is an $x$-$y$ geodesic, $P_2$ a geodesic between $y$ and~$z$ and $P_3$ are geodesic between $x$ and~$z$.
Then the images of the sides are $(\gamma,c)$-\qg s in~$X$.
Let $\kappa$ be a constant such that geodesic stability holds for~$\kappa$ in~$X$ with respect to $(\gamma,c)$-\qg s, see Corollary~\ref{cor_Delta1GeodStab}.

Let $u$ be on~$P_1$.
By geodesic stability, we find $u_1,u_2$ on some $f(x)$-$f(y)$ geodesic in~$X$ with $d(u,u_1)\leq\kappa$ and $d(u_2,u)\leq\kappa$.
As geodesic triangles are $\delta$-thin, there are $v_1,v_2$ on geodesics either from $f(x)$ to~$f(z)$ or from $f(y)$ to~$f(z)$ with $d(u_1,v_1)\leq\delta$ and $d(v_2,u_2)\leq\delta$.
Applying geodesic stability once more, there are $w_1,w_2$ on the images under~$f$ of $P_3$ and $P_2$ with $d(v_1,w_1)\leq\kappa$ and $d(w_2,v_2)\leq\kappa$.
So we have $d(u,w_1)\leq 2\kappa+\delta$ and $d(w_2,u)\leq 2\kappa+\delta$.
Since $f$ is a \qiy, it follows that there exists $\delta'\geq 0$, depending only on $\kappa,\delta,\gamma$ and~$c$ such that the geodesic triangle is $\delta'$-thin.
\end{proof}

We end this section by a stating the corresponding problem for divergence of geodesics.

\begin{prob}
Is proper divergence (or superlinar/exponential divergence) of geode\-sics preserved under \qis?
\end{prob}

\section{Hyperbolic semigroups}\label{sec_semigroups}

Let $S$ be a semigroups and let $A$ be a finite generating set of~$S$.
The \emph{(right) Cayley digraph} of~$S$ (with respect to~$A$) has $S$ as its vertex set and edges from $x$ to $xa$ for all $x\in S$ and $a\in A$.
This way, $S$ is a \hm\ space.
A straight-forward argument shows that different finite generating sets define \qi\ Cayley digraphs, see Gray and Kambites \cite[Proposition 4]{GK-SemimetricSpaces}.

A finitely generated semigroup is \emph{hyperbolic} if it has a finite generating set such that its Cayley digraph with respect to that generating set is hyperbolic.

Gray and Kambites \cite{GK-HyperbolicDigraphsMonoids} asked whether every Cayley digraph of a hyperbolic monoid with respect to a finite generating set is hyperbolic.
Proposition~\ref{prop_QIPreserveHyp} allows us to give at least a partial answer.

\begin{prop}
For every hyperbolic right cancellative semigroup, each of its Cayley digraphs with respect to finite generating sets is hyperbolic.
\end{prop}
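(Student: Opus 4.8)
The plan is to reduce the statement to Proposition~\ref{prop_QIPreserveHyp}, which says that hyperbolicity together with the geometric assumptions (\ref{itm_Bounded1}) and~(\ref{itm_Bounded2}) is preserved by \qis, combined with the fact recalled at the beginning of this section (Gray and Kambites~\cite[Proposition~4]{GK-SemimetricSpaces}) that the Cayley digraphs of a finitely generated semigroup with respect to different finite generating sets are \qi. So let $S$ be a right cancellative hyperbolic semigroup, fix a finite generating set $A$ such that the Cayley digraph $D_A$ of~$S$ with respect to~$A$ is hyperbolic, and let $B$ be an arbitrary finite generating set of~$S$ with Cayley digraph~$D_B$. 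To show that $D_B$ is hyperbolic it suffices to verify that $D_A$ satisfies (\ref{itm_Bounded1}) and~(\ref{itm_Bounded2}) and then to apply Proposition~\ref{prop_QIPreserveHyp} to the quasi-isometry between $D_A$ and~$D_B$.

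The only place where right cancellativity enters is in establishing (\ref{itm_Bounded1}) and~(\ref{itm_Bounded2}) for~$D_A$, and here I would argue that $D_A$ has bounded in- and out-degree, so that Lemma~\ref{lem_bddDegImpliesB1B2} applies directly. Indeed, each vertex $x\in S$ has out-neighbours only among the elements $xa$ with $a\in A$, so the out-degree of~$x$ is at most~$|A|$. For the in-degree, fix $x\in S$ and a generator $a\in A$; if $y_1a=x=y_2a$, then right cancellativity gives $y_1=y_2$, so there is at most one edge labelled~$a$ entering~$x$, and hence the in-degree of~$x$ is at most~$|A|$ as well. Thus $D_A$ is a hyperbolic digraph of bounded in- and out-degree, and Lemma~\ref{lem_bddDegImpliesB1B2} yields (\ref{itm_Bounded1}) and~(\ref{itm_Bounded2}).

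It then remains to assemble the pieces: by \cite[Proposition~4]{GK-SemimetricSpaces}, $D_A$ is \qi\ to~$D_B$ (viewed as geodesic \hm\ spaces); since $D_A$ is hyperbolic and satisfies (\ref{itm_Bounded1}) and~(\ref{itm_Bounded2}), Proposition~\ref{prop_QIPreserveHyp} applied with $X=D_A$ and $Y=D_B$ shows that $D_B$ is hyperbolic. As $B$ was an arbitrary finite generating set, every Cayley digraph of~$S$ with respect to a finite generating set is hyperbolic, which is the assertion.

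I do not anticipate a genuine obstacle: the argument is purely a combination of results already established, and the bounded-degree observation is elementary. The one point worth double-checking is that the versions of (\ref{itm_Bounded1}), (\ref{itm_Bounded2}) and of hyperbolicity appearing in Proposition~\ref{prop_QIPreserveHyp} are those of the associated geodesic \hm\ space of the digraph, which is exactly the setting in which Lemma~\ref{lem_bddDegImpliesB1B2} is phrased; hence no translation between the "digraph" and the "\hm\ space" formulations of these properties is required.
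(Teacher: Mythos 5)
Your proposal is correct and follows essentially the same route as the paper: right cancellativity bounds the in-degree (and the out-degree is trivially bounded), so Lemma~\ref{lem_bddDegImpliesB1B2} gives (\ref{itm_Bounded1}) and (\ref{itm_Bounded2}) for the hyperbolic Cayley digraph, and then the quasi-isometry invariance of Proposition~\ref{prop_QIPreserveHyp} transfers hyperbolicity to any other Cayley digraph. The only difference is cosmetic (your roles of $A$ and $B$ are swapped relative to the paper's), and your version is if anything slightly cleaner in that it only verifies the boundedness conditions on the digraph that is already known to be hyperbolic, which is all that Proposition~\ref{prop_QIPreserveHyp} requires.
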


\begin{proof}
Let $S$ be a hyperbolic right cancellative semigroup.
Let $A$ be a finite generating set and let $D$ be the Cayley digraph of~$S$ with respect to~$A$.
Then the out-degree of all vertices of~$D$ is~$|A|$.
Similarly, since $S$ is right cancellative, the in-degree of all vertices is bounded by~$|A|$.
In particular, $D$ satisfies (\ref{itm_Bounded1}) and~(\ref{itm_Bounded2}).

Let $B$ be a finite generating set of~$S$ such that the Cayley digraph $D'$ of~$S$ with respect to~$B$ is hyperbolic.
We also have the in- and out-degrees of all vertices bounded by $|B|$ in this situation.
Thus, Lemma~\ref{lem_bddDegImpliesB1B2} implies that $D'$ satisfies (\ref{itm_Bounded1}) and~(\ref{itm_Bounded2}), too.
Changing finite generating sets of semigroups lead to quasi-isometric Cayley digraphs, see e.\,g.\ \cite[Proposition 4]{GK-SemimetricSpaces}.
So Proposition~\ref{prop_QIPreserveHyp} directly implies that $D$ is hyperbolic.
\end{proof}

\providecommand{\bysame}{\leavevmode\hbox to3em{\hrulefill}\thinspace}
\providecommand{\MR}{\relax\ifhmode\unskip\space\fi MR }
% \MRhref is called by the amsart/book/proc definition of \MR.
\providecommand{\MRhref}[2]{%
  \href{http://www.ams.org/mathscinet-getitem?mr=#1}{#2}
}
\providecommand{\href}[2]{#2}

\end{document}